\documentclass[msom]{informs4}
\PassOptionsToPackage{table}{xcolor}
\RequirePackage{tgtermes}
\RequirePackage{newtxtext}
\RequirePackage{newtxmath}
\RequirePackage{bm}
\RequirePackage{endnotes}

\OneAndAHalfSpacedXII 

\usepackage{algorithm}
\usepackage{algpseudocode}

\usepackage{amsmath,amssymb,amsfonts}
\usepackage{mathtools}      

\usepackage{array}          
\usepackage{longtable}
\usepackage{booktabs}       

\usepackage{caption}

\usepackage{pifont}     
\usepackage{xcolor}     
\newcommand{\cmark}{\ding{51}} 

\usepackage[normalem]{ulem}
\newcommand{\add}[1]{\textcolor{blue}{#1}}

\usepackage[short]{optidef}

\newcolumntype{L}[1]{>{\raggedright\arraybackslash}p{#1}}

\makeatletter
\@ifundefined{SingleSpacedXI}{}{}
\makeatother

\usepackage{tikz}
\usetikzlibrary{shapes, arrows.meta, positioning, fit, backgrounds, shadows, calc, svg.path}

\usepackage{natbib}
 \bibpunct[, ]{(}{)}{,}{a}{}{,}%
 \def\bibfont{\small}%
\EquationsNumberedBySection 

\TheoremsNumberedThrough     
\ECRepeatTheorems  %

\begin{document}


\RUNAUTHOR{Guan, Hijazi, and Van Hentenryck}

\RUNTITLE{BDCG for End-to-End Supply Chain Planning in the Paper Industry}

\TITLE{End-to-End Supply Chain Planning in the Paper Industry Via Column Generation and Benders Decomposition}

\ARTICLEAUTHORS{
\AUTHOR{Changkun Guan,\textsuperscript{a} Amira Hijazi,\textsuperscript{a} Pascal Van Hentenryck\textsuperscript{a}}
\AFF{\textsuperscript{a}H. Milton Stewart School of Industrial \& Systems Engineering, Georgia Institute of Technology, Atlanta, Georgia}
} 
\ABSTRACT{%
\textbf{\textit{Problem definition:}} The paper studies an integrated end-to-end planning problem in large-scale paper manufacturing, where production scheduling, trimming decisions, vehicle loading, and multi-period fulfillment of make-to-order and make-to-stock demand must be coordinated over time. In practice, these decisions are often optimized sequentially, leading to material waste, inefficient transportation, and degraded service levels. Solving the fully integrated problem at industrial scale remains computationally challenging due to its combinatorial structure.
\textbf{\textit{Methodology / results:}} A key structural feature of the problem is that downstream fulfillment decisions
depend on upstream production and logistics choices only through aggregate supply availability over time. By exploiting this structure, the paper develops an exact mathematical formulation and proposes a two-phase hybrid framework (BDCG-DP) that integrates column generation (CG) using exact dynamic-programming (DP) for supply-side decisions with Benders decomposition (BD) for downstream fulfillment. Computational experiments on proprietary instances from a major North American paper manufacturer show that BDCG-DP lowers total costs by 24.4\% compared to a traditional CG-DP on challenging eight-week planning problems. Median runtime for four-week planning problems decreases from over five hours using CG-DP to under one hour using BDCG-DP.
\textbf{\textit{Managerial implications:}} This paper provides the first exact model that integrates production, trimming, load planning, and multi-period fulfillment at an industrial scale. The proposed approach returns integer-feasible plans within 2.3 to 6 hours for the most complex planning problems, enabling planners to access high-quality implementable schedules within hours, a capability that was previously unavailable in practice.}




\KEYWORDS{supply chain planning, large-scale optimization, column generation, Benders decomposition}

\maketitle

\section{Introduction}
\label{sec:introduction}
In real-world manufacturing industries, operational decision-making spans multiple layers, including
production scheduling, inventory management, and transportation planning. These layers are
tightly interconnected and often involve conflicting objectives, making coordinated end-to-end
planning essential for operational efficiency and profitability. The paper industry is no exception,
requiring integrated decision-making across customer order allocation, production scheduling, trimming, load planning, and transportation. Furthermore, these decisions must accommodate different demand fulfillment strategies: make-to-stock (MTS), where forecast-driven production replenishes external, geographically spread warehouses to balance inventory costs and service levels; and make-to-order (MTO), where production is triggered by customer requests, demanding flexible scheduling and rapid
response.  Moreover, in practice, decisions are re-optimized weekly in a rolling-horizon planning framework, where an eight-week planning horizon is updated as new information becomes available. Planners routinely evaluate multiple what-if scenarios to assess the impacts of demand fluctuations, capacity disruptions, and alternative logistics strategies.

Despite this interdependence, important components of the planning process are still often addressed sequentially in practice. For example, in integrated lot-sizing and cutting stock settings, a common practice is to solve the lot-sizing problem first and then solve the cutting-stock problem on the resulting quantities \citep{GRAMANI2006509}. At an industrial scale, this could result into material waste, infeasible or inefficient loading plans, increased transportation costs, and degraded customer service, motivating integrated optimization approaches.

From a modeling perspective, existing studies largely address these planning components in a decentralized manner or with limited coordination: production and lot-sizing models abstract away detailed trimming and loading, cutting-stock models rarely capture downstream inventory and fulfillment dynamics, and recent multi-period fulfillment models omit the combinatorial trimming and loading structure inherent to paper manufacturing. To the best of the authors' knowledge, no prior approach captures the interaction among production, trimming, vehicle loading, and multi-period MTS/MTO fulfillment in a single industrial-scale exact formulation.

The planning problem considered here integrates production scheduling across mills, trimming decisions, transportation planning, and multi-period inventory management within a unified optimization framework. A key structural feature of the problem is that downstream fulfillment decisions depend on upstream production and logistics choices only through aggregate supply availability over time. This aggregation enables separating the complex combinatorial decisions associated with production, trimming, and loading and the downstream demand fulfillment using decomposition methods.

\subsection{Contributions}
The main contributions of this paper are as follows:
\begin{itemize}
  \item \textbf{An exact formulation of the end-to-end industrial-scale planning problem.} A multi-period exact formulation is developed for the integrated paper manufacturing and distribution planning problem, jointly capturing production, trimming-pattern selection, vehicle loading decisions, transportation, and multi-period demand fulfillment of MTS and MTO demand.
  
  \item \textbf{A structural decomposition and hybrid solution framework.} The paper identifies the aggregate supply profile indexed by product, customer, and period as the structural interface between the supply-side decisions and downstream fulfillment. Once this profile is fixed, the downstream MTS problem decomposes exactly by product-customer pair. Exploiting this structure yields a hybrid solution framework that applies column generation to the supply-side trimming and loading decisions and Benders decomposition to the downstream fulfillment process. Exact dynamic programs solve the supply-side pricing subproblems, and an additional integer refinement step is applied over the generated column pools to produce implementable plans.
  
  \item \textbf{Computational evidence of scalability at an industrial scale.} On 58 proprietary instances, the proposed framework delivers massive performance improvements across all planning horizons. For one- and two-week horizons, the proposed method reduces median solution times by factors of 16.3 and 8.4 relative  to a standard column-generation baseline. On the most challenging eight-week horizons, the framework consistently dominates by improving the final root-LP gap in every single instance, lowering total costs by 24.4\%. 

  \item \textbf{Managerial insights for implementable plans.} On the most complex eight-week instances, the proposed framework finds its first integer-feasible solution within 2.0 and 5.7 hours. The generated solutions have an optimality gap of less than 10\% relative to the root LP and achieve a lower objective value than the final plans produced by the baseline method. As a result, planners can access high-quality implementable schedules within hours, a capability that was previously unavailable in practice.

\end{itemize}

The remainder of this paper is organized as follows. Section~\ref{sec:literature} reviews the literature. Section~\ref{sec:description} introduces the problem in detail, while Section~\ref{sec:mathematical_formulation} presents the mathematical formulations. Section~\ref{sec:column_generation_bender} describes the solution framework. Section~\ref{sec:case_study} reports the computational study, and Section~\ref{sec:conclusion} concludes the paper. Additional computational evidence and proofs are provided in the Electronic Companion.

\section{Literature Review}
\label{sec:literature}
\subsection{Lot-Sizing and Cutting Stock}
The one-dimensional cutting stock problem (CSP) chooses cutting patterns that meet item demand with as little trim loss as possible, while the lot-sizing problem decides how much to produce in each period. The linear-programming view of production planning goes back to \citet{Kantorovich1960-blx}, and \citet{GilmoreGomory1961} introduced the column-generation formulation that makes the CSP tractable. Later work extends the basic model in directions that matter in practice: uncertain demand \citep{stochastictrim-loss2009}, due dates and rolling-horizon planning \citep{CSPduedate2010}, the reuse of leftover material across periods \citep{CSPUL2014,CSPULSurvey}, and multiple stock lengths or larger pattern sets handled with cutting planes and column generation \citep{BELOV2002274,branchcutprice2006}. Paper production further constrains the cutting decision through minimum production times and setup costs \citep{CSPNonlinear1971,diegel2006enforcing}. On the lot-sizing side, the multi-item capacitated problem with setup times and lost sales has been solved with mixed-integer heuristics and cutting-plane methods \citep{RO2007MIP,EJOR2008}, and \citet{Loparic2001} studies the uncapacitated case with sales and safety stocks. All of these models stop at the production or cutting stage and do not consider how the resulting rolls are loaded, shipped, and used to meet demand.

\subsection{Vehicle Loading and Distribution Planning}
Once rolls are cut, they must be combined into feasible vehicle loads and delivered to customers, which adds packing and transportation decisions on top of cutting. Building a load is itself a packing problem, in the spirit of the knapsack model studied by \citet{caprara2004two}. \citet{CSPTransport2007} link cutting directly to transportation, solving a combined cutting-and-transportation model with Benders decomposition and column generation. A separate line of work studies how to manage inventory and order fulfillment over time: using advance demand information \citep{chan2024abi}, sharing inventory flexibly across locations \citep{devalve2023fulfillment}, choosing when to commit to customer orders \citep{figueira2023commit}, and adjusting lead times in assemble-to-order systems \citep{yu2024clf}. These fulfillment models treat the available supply as given.

\subsection{Integrated Planning in the Paper Industry}
A long line of work in the paper industry links cutting with upstream production. \citet{GRAMANI2006509} weigh trim loss against setup and inventory cost in a combined cutting-and-lot-sizing model and show clear gains over solving the two problems separately, and \citet{CSPlotsizing2008} extend this coupling to several paper machines. Later studies improve the solution methods: \citet{leao2017decomposition} apply Dantzig--Wolfe decomposition and column generation, \citet{wu2017progressive} combine reformulation with heuristic search, \citet{pierini2021analysis} tighten the integrated formulation, and \citet{furlan2024matheuristic} add paper-grade sequencing, digester speeds, and chemical recovery to the model. Going beyond cutting and lot sizing, \citet{pinar2002} build a decision-support system that schedules paper production together with distribution. This body of work shows the value of coordinating production and cutting.

\subsection{Algorithmic Paradigms: Column Generation and Benders Decomposition}
The proposed solution method in this paper relies on two classical decomposition ideas: Dantzig--Wolfe decomposition and Column Generation \citep{DantzigWolfe1960, GilmoreGomory1961}, and Benders decomposition \citep{Benders1962}. Recent studies combine the two ideas. \citet{BenderCG2021} pair Benders decomposition with column generation for a cutting stock problem with time-varying demand, \citet{basciftci2024adaptive} use a related decomposition for multi-period stochastic planning, and \citet{mandl2023commodity} solve a multi-period planning problem with data-driven mixed-integer optimization. None of these methods, however, uses the aggregate supply profile as the link that separates the supply-side cutting and loading decisions from downstream fulfillment.

Table~\ref{tab:litgap} provides a feature-by-feature comparison of this paper and the existing literature. To the best of the authors' knowledge, no prior study combines production scheduling across mills, pattern-based cutting, vehicle loading, transportation, and multi-period make-to-stock and make-to-order fulfillment in one exact, industrial-scale, rolling-horizon model. 

\begin{table*}[t]
\centering
\caption{Positioning of this paper relative to the literature.}
\label{tab:litgap}
\setlength{\tabcolsep}{3pt}\renewcommand{\arraystretch}{1.1}\scriptsize
\begin{tabular}{@{}>{\raggedright\arraybackslash}p{3.05cm} *{7}{>{\centering\arraybackslash}p{0.95cm}} >{\raggedright\arraybackslash}p{3.35cm}@{}}
\toprule
 & \multicolumn{6}{c}{\textbf{Problem features addressed}} & & \\
\cmidrule(lr){2-7}
\textbf{Study} & \textbf{\shortstack{Lot-\\sizing}} & \textbf{\shortstack{Cutting\\stock}} & \textbf{\shortstack{Vehicle\\loading}} & \textbf{MTS} & \textbf{MTO} & \textbf{\shortstack{Rolling\\horizon}} & \textbf{\shortstack{Ind.\\scale}} & \textbf{Solution technique} \\
\midrule
\rowcolor{gray!20}\multicolumn{9}{@{}l}{\textbf{\textit{Lot-Sizing and Cutting Stock}}}\\
\citet{CSPNonlinear1971} &  & \cmark &  &  &  &  & \cmark & Heuristic methods \\
\rowcolor{gray!7} 
\citet{Loparic2001} & \cmark &  &  & \cmark &  &  &  & Extended formulations; dynamic program \\
\citet{BELOV2002274} &  & \cmark &  &  &  &  &  & Cutting planes; column generation \\
\rowcolor{gray!7} 
\citet{branchcutprice2006} &  & \cmark &  &  &  &  &  & Branch-and-cut-and-price \\
\citet{diegel2006enforcing} &  & \cmark &  &  &  &  & \cmark & Analytical feasibility/cost study \\
\rowcolor{gray!7} \citet{RO2007MIP} & \cmark &  &  &  &  &  &  & MIP-based heuristics \\
\citet{EJOR2008} & \cmark &  &  &  &  &  & \cmark & MIP heuristics; branch-and-cut \\
\rowcolor{gray!7} \citet{stochastictrim-loss2009} &  & \cmark &  &  &  &  &  & Two-stage stochastic programming \\
\citet{CSPduedate2010} &  & \cmark &  &  &  & \cmark & \cmark & Optimization models; rolling horizon \\
\rowcolor{gray!7} \citet{CSPUL2014} &  & \cmark &  & \cmark &  &  &  & Survey (usable leftovers) \\
\citet{CSPULSurvey} &  & \cmark &  & \cmark &  &  &  & Literature review \\
\rowcolor{gray!20}\multicolumn{9}{@{}l}{\textbf{\textit{Vehicle Loading and Distribution Planning}}}\\
\rowcolor{gray!7} \citet{caprara2004two} &  &  & \cmark &  &  &  &  & Two-dimensional knapsack algorithms \\
\citet{CSPTransport2007} &  & \cmark & \cmark &  &  &  & \cmark & Benders partitioning; column generation \\
\rowcolor{gray!7} \citet{chan2024abi} &  &  &  & \cmark &  &  & \cmark & Stochastic inventory; state aggregation \\
\citet{devalve2023fulfillment} &  &  &  & \cmark & \cmark &  & \cmark & Dynamic/stochastic optimization; simulation \\
\rowcolor{gray!7} \citet{figueira2023commit} &  &  &  & \cmark & \cmark &  & \cmark & Allocation policies; empirical \\
\citet{yu2024clf} &  &  &  & \cmark & \cmark & \cmark & \cmark & Dynamic programming \\
\rowcolor{gray!20}\multicolumn{9}{@{}l}{\textbf{\textit{Integrated Planning in the Paper Industry}}}\\
\rowcolor{gray!7} \citet{pinar2002} & \cmark &  &  &  &  &  & \cmark & Decision-support system; multicriteria \\
\citet{GRAMANI2006509} & \cmark & \cmark &  &  &  &  &  & Integrated model; shortest-path \\
\rowcolor{gray!7} \citet{CSPlotsizing2008} & \cmark & \cmark &  &  &  &  & \cmark & Coupled model; heuristics \\
\citet{leao2017decomposition} & \cmark & \cmark &  &  &  &  & \cmark & Dantzig--Wolfe; column generation; ALNS \\
\rowcolor{gray!7} \citet{wu2017progressive} & \cmark & \cmark &  &  &  &  &  & Progressive selection; reformulation \\
\citet{pierini2021analysis} & \cmark & \cmark &  &  &  &  &  & Formulation analysis; relax-and-fix \\
\rowcolor{gray!7} \citet{furlan2024matheuristic} & \cmark &  &  & \cmark &  &  & \cmark & Matheuristic; fix-and-optimize \\
\rowcolor{gray!20}\multicolumn{9}{@{}l}{\textbf{\textit{Algorithmic Paradigms: Column Generation and Benders Decomposition}}}\\
\citet{BenderCG2021} &  & \cmark &  &  &  &  & \cmark & Hybrid Benders + column generation \\
\rowcolor{gray!7} \citet{basciftci2024adaptive} &  &  &  &  &  & \cmark & \cmark & Adaptive two-stage stochastic programming \\
\midrule
\rowcolor{gray!28}\textbf{This paper} & \cmark & \cmark & \cmark & \cmark & \cmark & \cmark & \cmark & \textbf{Hybrid BDCG-DP} \\
\bottomrule
\end{tabular}
{\par\vspace{3pt}\raggedright\scriptsize \textit{Notes.} \cmark{} indicates the feature is addressed by the study; a blank cell indicates it is not. Lot-sizing $=$ multi-period lot sizing; MTS $=$ make-to-stock fulfillment and inventory; MTO $=$ make-to-order fulfillment; Rolling horizon $=$ periodic re-optimization over a rolling planning horizon as information is updated; Ind.\ scale $=$ industrial-scale instances. Studies are grouped by the research stream, as in Section~\ref{sec:literature}.\par}
\end{table*}

\section{The Integrated Paper Manufacturing and Distribution Scheduling Problem Description}
\label{sec:description}

This paper studies an integrated planning problem arising in large-scale paper manufacturing
systems that jointly optimizes production scheduling, trimming, transportation, and multi-period
inventory management over a finite planning horizon with $T$ periods. The problem is motivated
by industrial settings in which these decisions are highly interdependent yet are often addressed in
isolation, leading to material waste, infeasible logistics plans, or inefficient inventory utilization.

Production is carried out on a set of machines located across multiple mills. Within each planning period $t = 1, \dots, T$, the available capacity on a machine is partitioned into a sequence of \emph{production runs}, defined as contiguous time blocks during which a single paper grade (type) is produced. Each run  is bounded by maximum allowable duration and produces \emph{jumbo rolls}, which constitute large paper reels that are subsequently cut into finished products. The sequence of these runs is fixed a priori to reflect long-term planning decisions such as grade changeovers, setup considerations, and machine availability. Given this predetermined sequence, \textbf{production planning} determines the duration of each run and the number of
jumbo rolls produced, subject to machine capacity constraints. Due to physical storage limitations within the mill, \textit{all jumbo rolls produced during a run must be fully processed, and no inventory of jumbo rolls is permitted.}

Each jumbo roll has a fixed deckle width, whereas final  customer products require diverse widths. The \textbf{trimming}
stage consequently involves selecting feasible combinations of customer demands such that their aggregate width does not exceed the deckle width of the jumbo roll. When considered in isolation, this problem
corresponds to the classical one-dimensional knapsack problem  \citep{Kantorovich1960-blx, GilmoreGomory1961}, where the objective is to satisfy demand while minimizing trim loss. In the integrated setting studied here, however, trimming decisions are linked both to the production runs that create supply and to the loading decisions that distribute finished items downstream.

After trimming, finished items are distributed through a transportation network using a finite set of vehicle
types. In the make-to-stock (MTS) channel, shipments replenish external, geographically spread warehouses. In the MTO channel, shipments go directly to customers. \textbf{Transportation planning} determines how items are consolidated into vehicle loads subject to weight and loading constraints.  This
problem corresponds to the two-dimensional knapsack problem  \citep{caprara2004two}. \textit{Following the real-world practice, this paper assumes that items shipped on a vehicle must originate from the
same production run and be delivered to the same destination, so loading feasibility is coupled with the cutting decision.}

Finally, \textbf{inventory decisions} balance supply and demand over time and across locations for the
MTS channel, where finished items may be carried from one period to the next. In contrast, MTO
demand is strictly period-specific and cannot be backlogged across periods. Unmet demand for
both MTO and MTS customers, as well as inventory levels below prescribed safety stock targets
for MTS customers, result in penalties representing lost sales and service-level violations.

The model is used in a rolling-horizon setting consistent with industrial practice. In the case study, periods are weeks, and the horizon extends up to eight weeks. At each planning period, the model is reoptimized using updated demand forecasts, inventory positions, and operating conditions. Planners also use the model to evaluate scenarios involving demand fluctuations, capacity changes, and alternative transportation options.  As a result, the optimization model must be solved repeatedly and efficiently, further amplifying the challenges posed by the problem’s scale and combinatorial nature. Figure~\ref{fig:E2E_problem} provides a schematic view of the problem.
\begin{figure}[!t]
  \centering
  \resizebox{0.95\textwidth}{!}{
\begin{tikzpicture}[
  every node/.style={font=\sffamily},
  stagetitle/.style={font=\sffamily\bfseries\footnotesize, align=center, text width=3.0cm},
  stagedesc/.style={font=\sffamily\scriptsize, align=center, text width=3.2cm, anchor=north, execute at begin node={\hyphenpenalty=10000\exhyphenpenalty=10000\relax}},
  icon/.style={line cap=round, line join=round, draw=black},
  flow/.style={-{Stealth[length=2.6mm]}, line width=0.5mm, draw=black}
]
\node[stagetitle] at (0,1.65) {Production Planning};
\begin{scope}[shift={(0,0)}, yscale=-1, scale=1.85, shift={(-12pt,-12pt)}]
  \draw[icon, line width=1.3mm] svg {M3 21h18};
  \draw[icon, line width=1.3mm] svg {M5 21v-12l5 4v-4l5 4h4};
  \draw[icon, line width=1.3mm] svg {M19 21v-8l-1.436 -9.574a0.5 0.5 0 0 0 -0.495 -0.426h-1.145a0.5 0.5 0 0 0 -0.494 0.418l-1.43 8.582};
  \draw[icon, line width=1.3mm] svg {M9 17h1};
  \draw[icon, line width=1.3mm] svg {M14 17h1};
\end{scope}
\node[stagedesc] at (0,-1.15) {Schedule machine run times to produce jumbo rolls};

\node[stagetitle] at (3.4,1.65) {Trimming};
\begin{scope}[shift={(3.4,0)}, yscale=-1, scale=1.85, shift={(-12pt,-12pt)}]
  \draw[icon, line width=1.3mm] svg {M6 7m-3 0a3 3 0 1 0 6 0a3 3 0 1 0 -6 0};
  \draw[icon, line width=1.3mm] svg {M6 17m-3 0a3 3 0 1 0 6 0a3 3 0 1 0 -6 0};
  \draw[icon, line width=1.3mm] svg {M8.6 8.6l10.4 10.4};
  \draw[icon, line width=1.3mm] svg {M8.6 15.4l10.4 -10.4};
\end{scope}
\node[stagedesc] at (3.4,-1.15) {Cut each jumbo roll into product widths within the deckle width};

\node[stagetitle] at (6.8,1.65) {Finished Products};
\begin{scope}[shift={(6.8,0.42)}, yscale=-1, scale=1.19, shift={(-12pt,-12pt)}]
  \draw[icon, line width=0.85mm] svg {M5 6c0 -1.657 3.134 -3 7 -3s7 1.343 7 3s-3.134 3 -7 3s-7 -1.343 -7 -3};
  \draw[icon, line width=0.85mm] svg {M5 6v12c0 1.657 3.134 3 7 3s7 -1.343 7 -3v-12};
\end{scope}
\begin{scope}[shift={(6.35,-0.42)}, yscale=-1, scale=1.19, shift={(-12pt,-12pt)}]
  \draw[icon, line width=0.85mm] svg {M5 6c0 -1.657 3.134 -3 7 -3s7 1.343 7 3s-3.134 3 -7 3s-7 -1.343 -7 -3};
  \draw[icon, line width=0.85mm] svg {M5 6v12c0 1.657 3.134 3 7 3s7 -1.343 7 -3v-12};
\end{scope}
\begin{scope}[shift={(7.25,-0.42)}, yscale=-1, scale=1.19, shift={(-12pt,-12pt)}]
  \draw[icon, line width=0.85mm] svg {M5 6c0 -1.657 3.134 -3 7 -3s7 1.343 7 3s-3.134 3 -7 3s-7 -1.343 -7 -3};
  \draw[icon, line width=0.85mm] svg {M5 6v12c0 1.657 3.134 3 7 3s7 -1.343 7 -3v-12};
\end{scope}
\node[stagedesc] at (6.8,-1.15) {Finished products ready for shipment};

\node[stagetitle] at (10.2,1.65) {Transportation \& Logistics};
\begin{scope}[shift={(10.2,0)}, yscale=-1, scale=1.85, shift={(-12pt,-12pt)}]
  \draw[icon, line width=1.3mm] svg {M7 17m-2 0a2 2 0 1 0 4 0a2 2 0 1 0 -4 0};
  \draw[icon, line width=1.3mm] svg {M17 17m-2 0a2 2 0 1 0 4 0a2 2 0 1 0 -4 0};
  \draw[icon, line width=1.3mm] svg {M5 17h-2v-11a1 1 0 0 1 1 -1h9v12m-4 0h6m4 0h2v-6h-8m0 -5h5l3 5};
\end{scope}
\node[stagedesc] at (10.2,-1.15) {Build vehicle loads and dispatch shipments};

\node[stagetitle, text width=3.4cm] at (14.4,2.55) {Make-to-Stock (MTS)};
\begin{scope}[shift={(14.4,1.5)}, yscale=-1, scale=1.42, shift={(-12pt,-12pt)}]
  \draw[icon, line width=1.0mm] svg {M3 21v-13l9 -4l9 4v13};
  \draw[icon, line width=1.0mm] svg {M13 13h4v8h-10v-6h6};
  \draw[icon, line width=1.0mm] svg {M13 21v-9a1 1 0 0 0 -1 -1h-2a1 1 0 0 0 -1 1v3};
\end{scope}
\node[stagedesc, text width=3.4cm] at (14.4,0.7) {Shipments replenish external warehouses inventory};

\node[stagetitle, text width=3.4cm] at (14.4,-1.0) {Make-to-Order (MTO)};
\begin{scope}[shift={(14.4,-2.1)}, yscale=-1, scale=1.42, shift={(-12pt,-12pt)}]
  \draw[icon, line width=1.0mm] svg {M8 7a4 4 0 1 0 8 0a4 4 0 0 0 -8 0};
  \draw[icon, line width=1.0mm] svg {M6 21v-2a4 4 0 0 1 4 -4h4a4 4 0 0 1 4 4v2};
\end{scope}
\node[stagedesc, text width=3.4cm] at (14.4,-2.9) {Shipments go directly to customers};

\draw[flow] (1.15,0) -- (2.25,0);
\draw[flow] (4.55,0) -- (5.65,0);
\draw[flow] (7.95,0) -- (9.05,0);
\draw[flow] (11.35,0.25) -- (12.75,1.3);
\draw[flow] (11.35,-0.25) -- (12.75,-1.9);
\end{tikzpicture}}
  \caption[Integrated End-to-End Supply Chain Problem]{Integrated end-to-end supply chain problem: production runs create jumbo rolls, trimming cuts them into finished products, vehicle loading and transportation deliver them, and shipments serve make-to-stock warehouses and make-to-order customers}.
  \label{fig:E2E_problem}
\end{figure}
\section{Mathematical Formulation}
\label{sec:mathematical_formulation}

To model the integrated paper manufacturing and distribution scheduling problem, the formulation is developed in two steps. An item-based formulation is first introduced to explicitly map the physical and operational constraints at the level of individual jumbo rolls and vehicles. While exact, this formulation is highly symmetric and computationally intractable for industrial-scale instances. To overcome these computational hurdles, a pattern-based reformulation is subsequently presented. 
Table~\ref{aix_tab:notation_new} presents the core parameters and sets utilized across both formulations.

\begin{table}[!t]
  \centering
  \caption{Summary of notation}
  \label{aix_tab:notation_new}
  {\small
  \begin{tabular}{@{}L{0.20\textwidth}L{0.78\textwidth}@{}}
    \toprule
    \textbf{Symbol} & \textbf{Description} \\
    \midrule
    \multicolumn{2}{l}{\textbf{Sets and indices}}\\
    $M$ & Machines (index $m$)\\
    $T$ & Planning periods (index $t$)\\
    $P$ & Finished products (index $p$)\\
    $C^{\text{MTS}}$ & MTS customers (index $c$)\\
    $C^{\text{MTO}}$ & MTO customers (index $c$)\\
    $C$ & Customers, $C = C^{\text{MTS}} \cup C^{\text{MTO}}$\\
    $B$ & Production runs (index $b$)\\
    $B_{m,t}$ & Runs on machine $m$ in period $t$\\
    $B_t$ & Runs producing in period $t$, $B_t = \bigcup_{m \in M} B_{m,t}$\\
    $O_b$ & Potential jumbo rolls in run $b$ (index $o$)\\
    $V$ & Vehicle types (index $v$)\\
    $J_{b,c,v}$ & Potential vehicles of type $v$ from run $b$ to customer $c$ (index $j$)\\
    $K_b$ & Set of feasible trimming patterns for run $b$ \\
    $\Gamma_{b,c,v}$ & Set of feasible loading configurations for a vehicle of type $v$ dispatched from run $b$ to customer $c$\\
    $N$ & Segments for the piecewise-linear safety-stock penalty (index $i$)\\[4pt]

    \multicolumn{2}{l}{\textbf{Parameters}}\\
    $A_{m,t}$ & Available production capacity (hours) for machine $m$ in period $t$\\
    $r^{\max}_b$ & Maximum duration (hours) for run $b$\\
    $h_b$ & Hours required to produce one jumbo roll in run $b$\\
    $\mathrm{cost}^{\text{run}}_b$ & Run cost rate (per hour)\\
    $W_b$ & Deckle width of jumbo rolls in run $b$\\
    $w_p$ & Finished width of product $p$\\
    $g_{b,p}$ & Weight per unit of product $p$ produced in run $b$\\
    $q_p$ & Loading positions required per unit of product $p$\\
    $x^{\min}_{b,p,c,v}$ & Minimum shipment of product $p$ from run $b$ to customer $c$ via type $v$\\
    $G_v$ & Weight capacity per vehicle of type $v$\\
    $Q_v$ & Position capacity per vehicle of type $v$\\
    $\mathrm{cost}^{\text{veh}}_{b,c,v}$ & Transportation cost per vehicle of type $v$ from run $b$ to customer $c$\\
    $d_{p,c,t}$ & Demand for product $p$ at customer $c$ in period $t$\\
    $I^{\text{init}}_{p,c}$ & Initial inventory at MTS customer $c$\\
    $\mathrm{cost}^{\text{lost}}_{p,c}$ & Lost-sales penalty per unit of product $p$ at customer $c$\\
    $ss_{p,c,t}$ & Safety-stock target\\
    $\alpha_{p,c,t,i}$, $\beta_{p,c,t,i}$ & Piecewise-linear safety-stock penalty coefficients\\[4pt]
    \bottomrule
  \end{tabular}
  }
\end{table}

\subsection{Item-Based Formulation}\label{sec:item_based_model}

The item-based formulation explicitly represents production, trimming, and transportation decisions
at the level of finished products. In particular, jumbo rolls are objects that
must be produced, cut into finished products, and subsequently loaded onto vehicles for delivery. This level of detail results in a very large mixed-integer formulation. As such, the item-based
model is primarily intended as a reference formulation and as a foundation for the decomposition
approach introduced later, rather than as a model to be solved directly at an industrial scale. 

Production is organized into a predefined set of runs $b\in B$, each corresponding to a machine, a paper grade, and a time period. Since the exact number of jumbo rolls in a run is not known in advance, the model introduces for each run $b$ an upper bound on the rolls that could be produced in $b$ defined by the maximum allowable run duration, $r^{\max}_b$,  divided by the number of production hours needed per jumbo roll, $h_b$. Thus, the set of potential jumbo rolls is given by
\[ 
O_b=\{1,\ldots,\lfloor r^{\max}_b/h_b \rfloor\}. 
\]
The production and trimming decisions are represented by the following variables:
\begin{itemize}
    \item $r_b \ge 0$: duration (hours) of run $b$
    \item $u_{b,o} \in \{0,1\}$: 1 if jumbo roll $o$ is produced in run $b$
    \item $x_{o,p,c,v} \in \mathbb{Z}_+$: units of product $p$ cut from roll $o$ and allocated to customer $c$ for shipment using vehicle type $v$
\end{itemize}
Transportation is modeled at the level of individual vehicle assignment. A vehicle type (e.g., truck or railcar) is indexed by $v\in V$, with weight capacity $G_v$ and position capacity $Q_v$. Consistent with operational
practice, each vehicle carries products originating from a single production run and destined for a
single customer.
Thus, for each combination of run $b$, customer $c$, and vehicle type $v$, the model introduces a finite set of potential
vehicles $J_{b,c,v}$ such that:
\[
  |J_{b,c,v}|
  \;=\;
  \left\lfloor \frac{r^{\max}_b}{h_b} \right\rfloor
  \left\lfloor \frac{W_b}{\underline w_b} \right\rfloor,
  \qquad
  \underline w_b := \min\{w_p : w_p \le W_b\},
\]
which provides a conservative upper bound based on the maximum number of products that can be produced in run $b$.
 
 The transportation decisions are represented by:
\begin{itemize}   
    \item $\chi_{b,p,c,v,j} \in \mathbb{Z}_+$: units of product $p$ loaded onto vehicle $j \in J_{b,c,v}$,
    \item $z_{b,c,v,j} \in \{0,1\}$: equals 1 if vehicle $j \in J_{b,c,v}$ is dispatched
\end{itemize}

 The downstream fulfillment decisions are represented by:
\begin{itemize}   
    \item $I^{+}_{p,c,t} \in \mathbb{Z}_+$: end-of-period inventory for MTS customers,
    \item $l_{p,c,t} \in \mathbb{Z}_+$: lost sales, with $0 \le l_{p,c,t} \le d_{p,c,t}$,
    \item $I^{-}_{p,c,t} \in \mathbb{Z}_+$: safety-stock shortfall,
    \item $\psi_{p,c,t} \ge 0$: safety-stock penalty.
\end{itemize}

\subsubsection{Objective Function}
The objective is to minimize total cost comprising production cost $(\mathrm{cost}^{\text{run}}_b)$, transportation costs $(\mathrm{cost}^{\text{veh}}_{b,c,v})$,
lost-sales penalties $(\mathrm{cost}^{\text{lost}}_{p,c})$, and safety-stock violation penalties $(\psi_{p,c,t})$ as follows:
\begin{equation}
\label{aix_eq:obj_new}
\begin{aligned}
  \min \quad 
  & \sum_{b \in B} \mathrm{cost}^{\text{run}}_b\,r_b
    + \sum_{b \in B}\sum_{c \in C}\sum_{v \in V}\sum_{j \in J_{b,c,v}}
        \mathrm{cost}^{\text{veh}}_{b,c,v}\,z_{b,c,v,j} \\
  & + \sum_{p \in P}\sum_{c \in C}\sum_{t \in T} \mathrm{cost}^{\text{lost}}_{p,c}\,l_{p,c,t} 
    + \sum_{p \in P}\sum_{c \in C^{\text{MTS}}}\sum_{t \in T} \psi_{p,c,t}
\end{aligned}
\end{equation}
\subsubsection{Production Constraints}
Production feasibility is enforced by the following constraints:
\begin{subequations}
\allowdisplaybreaks
\begin{align}
  & \sum_{b \in B_{m,t}} r_b \le A_{m,t}
    && \forall\, m \in M,\ t \in T \label{aix_eq:cap_machine} \\
  & r_b \le r^{\max}_b
    && \forall\, b \in B \label{aix_eq:block_bound} \\
  & r_b \ge h_b \sum_{o \in O_b} u_{b,o}
    && \forall\, b \in B \label{aix_eq:run_vs_rolls}
\end{align}
\end{subequations}
The production capacity constraint of the machines for each time period is given by \eqref{aix_eq:cap_machine}.
Constraint \eqref{aix_eq:block_bound} limits the operating time of each production run to its maximum allowable duration, $r^{\max}_b$. Finally, Constraint \eqref{aix_eq:run_vs_rolls} ensures that $r_b$ covers the total production time of the jumbo rolls produced in $b$.

\subsubsection{Cutting and Loading Constraints}
For each run $b$
and jumbo roll $o\in O_b$, the variable $x_{o,p,c,v}$ gives the number of units of product $p$ cut from roll $o$ and
allocated to customer $c$ for delivery via vehicle type $v$. 
Cutting feasibility is enforced by:
\begin{subequations}
\allowdisplaybreaks
\begin{align}
  & \sum_{p \in P}\sum_{c \in C}\sum_{v \in V}
      w_p\,x_{o,p,c,v}
    \le W_b\,u_{b,o}
    && \forall\, b \in B,\ o \in O_b \label{aix_eq:deckle_cap} 
    \end{align}
\end{subequations}
Individual vehicle feasibility is enforced through the potential-vehicle sets $J_{b,c,v}$. The binary variable
$z_{b,c,v,j}$ indicates whether vehicle $j\in J_{b,c,v}$ is dispatched from run $b$ to customer $c$ using vehicle type $v$,
and $\chi_{b,p,c,v,j}$ denotes the number of units of product $p$ loaded onto that vehicle. The loading constraints are:   
\begin{subequations}
\allowdisplaybreaks
\begin{align}
  & \sum_{o \in O_b} x_{o,p,c,v}
    = \sum_{j \in J_{b,c,v}} \chi_{b,p,c,v,j}
    && \forall\, b \in B,\ p \in P,\ c \in C,\ v \in V \label{aix_eq:load_link} \\
  & \sum_{p \in P} g_{b,p}\,\chi_{b,p,c,v,j} \le G_v\,z_{b,c,v,j}
    && \forall\, b \in B,\ c \in C,\ v \in V,\ j \in J_{b,c,v} \label{aix_eq:veh_wt_ind} \\
  & \sum_{p \in P} q_p\,\chi_{b,p,c,v,j} \le Q_v\,z_{b,c,v,j}
    && \forall\, b \in B,\ c \in C,\ v \in V,\ j \in J_{b,c,v} \label{aix_eq:veh_pos_ind} \\
     & \sum_{o \in O_b} x_{o,p,c,v} \ge x^{\min}_{b,p,c,v}
    && \forall\, b \in B,\ p \in P,\ c \in C,\ v \in V \label{aix_eq:min_ship}
\end{align}
\end{subequations}
Constraint
\eqref{aix_eq:load_link} links cutting and loading. Constraints
\eqref{aix_eq:veh_wt_ind}--\eqref{aix_eq:veh_pos_ind} enforce per-vehicle weight and position capacities using the unit weight
$g_{b,p}$ and per-unit position requirement $q_p$.  Additionally, minimum shipment requirements are enforced by Constraint \eqref{aix_eq:min_ship}.

\subsubsection{Downstream Fulfillment Constraints}
The final set of constraints governs demand fulfillment over the planning horizon. In both channels, lost sales are bounded above by the period's demand they represent. This upper bound is especially important for the MTS channel because lost sales enter the inventory recursion; without it, the model could create artificial inventory by recording lost sales in excess of actual demand.

For \textbf{Make-to-Order (MTO)} customers, demand is period-specific and cannot be backlogged across periods:
\begin{equation}
  \sum_{b \in B_t}\sum_{o \in O_b}\sum_{v \in V}
      x_{o,p,c,v} + l_{p,c,t}
    \ge d_{p,c,t}
    \quad \forall\, p \in P,\ c \in C^{\mathrm{MTO}},\ t \in T \label{aix_eq:mto_balance}
\end{equation}
The lost-sales variables satisfy
\begin{equation}
  0 \le l_{p,c,t} \le d_{p,c,t}
  \quad \forall\, p \in P,\ c \in C,\ t \in T \label{aix_eq:lost_sales_cap}
\end{equation}
Constraint \eqref{aix_eq:mto_balance} allows unmet demand to be captured as lost sales for MTO customers, and \eqref{aix_eq:lost_sales_cap} ensures that lost sales do not exceed the corresponding period demand.

For \textbf{Make-to-Stock (MTS)} customers, the end-of-period inventory flow balance constraints are as follows:
\begin{subequations}
\allowdisplaybreaks
\begin{align}
  I^{+}_{p,c,1} &= I^{\text{init}}_{p,c} - d_{p,c,1} + l_{p,c,1}
    + \sum_{b \in B_1}\sum_{o \in O_b}\sum_{v \in V}
          x_{o,p,c,v}
    && \forall\, p \in P,\ c \in C^{\mathrm{MTS}}
      \label{aix_eq:mts_balance-init} \\
  I^{+}_{p,c,t} &= I^{+}_{p,c,t-1} - d_{p,c,t} + l_{p,c,t}
    + \sum_{b \in B_t}\sum_{o \in O_b}\sum_{v \in V}
          x_{o,p,c,v}
    && \forall\, p \in P,\ c \in C^{\mathrm{MTS}},\ t > 1
      \label{aix_eq:mts_balance-rec}
\end{align}
\end{subequations}

Finally, for MTS customers, safety-stock constraints are 
\begin{subequations}
\allowdisplaybreaks
\begin{align}
  & I^{-}_{p,c,t} \ge ss_{p,c,t} - I^{+}_{p,c,t}
    && \forall\, p \in P,\ c \in C^{\mathrm{MTS}},\ t \in T
      \label{aix_eq:ss_def} \\
  & \psi_{p,c,t} \ge \alpha_{p,c,t,i}\, I^{-}_{p,c,t} - \beta_{p,c,t,i}
    && \forall\, p \in P,\ c \in C^{\mathrm{MTS}},\ t \in T,\ i \in N
      \label{aix_eq:ss_cost_cuts}
\end{align}
\end{subequations}
Constraint \eqref{aix_eq:ss_def} defines $I^{-}_{p,c,t}$ as the nonnegative deviation below the safety stock level. Under the same discrete inventory convention used for demand and inventory quantities, this shortfall is measured in integer units in the reference model. The variable
$\psi_{p,c,t}$ represents the corresponding penalty cost and is modeled as a convex piecewise-linear function of
$I^{-}_{p,c,t}$. Since $\psi_{p,c,t}$ is minimized in the objective, constraints \eqref{aix_eq:ss_cost_cuts} enforce the
epigraph representation of this convex penalty. Operationally, the penalty data are specified on a discrete inventory scale, so
the equivalent breakpoint representation has integral breakpoints. 

\subsection{Pattern-Based Reformulation}
\label{sec:pattern_based_model}
The item-based formulation is exact but highly symmetric: jumbo rolls within a run are interchangeable, and so are vehicles of the same type on a lane. The pattern-based reformulation preserves exactness while aggregating those interchangeable objects. It replaces roll-level trimming  decisions with trimming patterns and vehicle-level loading decisions with loading configurations.

Let $K_b$ denote the set of feasible trimming patterns for run $b$. A pattern $k \in K_b$ describes how one jumbo roll is cut, specifying the quantity $a_{p,c,v,k}$ of product $p$ allocated to customer $c$ for delivery via vehicle type $v$. Similarly, let $\Gamma_{b,c,v}$ denote the set of feasible loading configurations for a vehicle of type $v$ dispatched from run $b$ to customer $c$. Each configuration $\gamma \in \Gamma_{b,c,v}$ represents one feasible vehicle load on that lane through the quantities $a^{\text{load}}_{b,p,c,v,\gamma}$ of each product $p$ that the vehicle can carry. Feasibility requires that the total weight and position requirements of the loaded products do not exceed the vehicle capacities $G_v$ and $Q_v$. The decision variables for the pattern-based reformulations are as follows: 
\begin{itemize}
    \item $y^{\text{cut}}_{b,k} \in \mathbb{Z}_+$: Number of jumbo rolls in run $b$ processed according to cutting pattern $k \in K_b$.
    \item $y^{\text{load}}_{b,c,v,\gamma} \in \mathbb{Z}_+$: Number of vehicles of type $v$ from run $b$ to customer $c$ packed according to configuration $\gamma \in \Gamma_{b,c,v}$.
    \item $S_{p,c,t}$: Auxiliary \textit{supply profile} defining the quantity of product $p$ shipped to customer $c$ in period $t$, aggregated over all production runs and vehicle types in that period; an unrestricted continuous variable fixed by constraint~\eqref{eq:supply_profile_def}.
\end{itemize}

The pattern-based master problem is:
\begin{subequations} 
\label{form:MP}
\allowdisplaybreaks
\begin{align}
  \min \quad 
  &  \sum_{b \in B} \mathrm{cost}^{\text{run}}_b\,r_b
  + \sum_{b \in B}\sum_{c \in C}\sum_{v \in V} \sum_{\gamma \in \Gamma_{b,c,v}}
      \mathrm{cost}^{\text{veh}}_{b,c,v} \, y^{\text{load}}_{b,c,v,\gamma} \nonumber \\
  & + \sum_{p \in P}\sum_{c \in C}\sum_{t \in T} \mathrm{cost}^{\text{lost}}_{p,c}\,l_{p,c,t} 
  + \sum_{p \in P}\sum_{c \in C^{\text{MTS}}}\sum_{t \in T} \psi_{p,c,t} && \label{eq:MP_obj} \\[6pt]
  \text{s.t.} \quad 
  & \eqref{aix_eq:cap_machine},\;\eqref{aix_eq:block_bound}
  && \text{(machine capacity, run bound)} \nonumber \\
  & r_b \ge h_b \sum_{k \in K_b} y^{\text{cut}}_{b,k} 
  && \forall\, b \in B \quad (\mu_b) \label{eq:pat_run_vs_patterns} \\
  & \sum_{\gamma \in \Gamma_{b,c,v}} a^{\text{load}}_{b,p,c,v,\gamma}\,y^{\text{load}}_{b,c,v,\gamma}
  \ge \sum_{k \in K_b} a_{p,c,v,k}\,y^{\text{cut}}_{b,k} 
  && \forall\, b,\, p,\, c,\, v \quad (\lambda_{b,p,c,v}) \label{eq:pat_coupling} \\
  & \sum_{k \in K_b} a_{p,c,v,k}\,y^{\text{cut}}_{b,k} \ge x^{\min}_{b,p,c,v}
  && \forall\, b,\, p,\, c,\, v \quad (\eta_{b,p,c,v}) \label{eq:pat_min_ship} \\
  & S_{p,c,t} = \sum_{b \in B_t} \sum_{k \in K_b} \sum_{v \in V} a_{p,c,v,k}\,y^{\text{cut}}_{b,k}
  && \forall\, p,\, c,\, t \quad (\rho_{p,c,t}) \label{eq:supply_profile_def} \\
  & S_{p,c,t} + l_{p,c,t} \ge d_{p,c,t}
  && \forall\, p,\, c \in C^{\mathrm{MTO}},\, t  \label{eq:pat_mto} \\
  & I^{+}_{p,c,t} = I^{+}_{p,c,t-1} - (d_{p,c,t} - l_{p,c,t}) + S_{p,c,t}
  && \forall\, p,\, c \in C^{\mathrm{MTS}},\, t  \label{eq:pat_mts} \\
  & 0 \le l_{p,c,t} \le d_{p,c,t}
  && \forall\, p,\, c,\, t \label{eq:pat_lost_cap} \\
  & \eqref{aix_eq:ss_def},\;\eqref{aix_eq:ss_cost_cuts}
  && \text{(safety-stock penalty)} \nonumber
\end{align}
\end{subequations}

Constraint~\eqref{eq:pat_run_vs_patterns} enforces the run time implied by the selected trimming patterns. Constraint~\eqref{eq:pat_coupling} ensures sufficient  transportation capacity on each lane. Constraint~\eqref{eq:pat_min_ship} carries forward the minimum-shipment obligations from the item-based model; it is imposed on the cutting side because cut quantities determine realized shipments. Constraint~\eqref{eq:supply_profile_def} defines the aggregate supply profile used by the downstream MTO and MTS balance equations, and \eqref{eq:pat_lost_cap} carries forward the exact upper bound on lost sales. The convention $I^{+}_{p,c,0}:=I^{\text{init}}_{p,c}$ is adopted.

The Electronic Companion records the finite-index exactness argument for this reformulation. At a high level, the reformulation aggregates interchangeable jumbo rolls and same-type vehicles into counts of identical trimming patterns and loading configurations. On the loading side, \eqref{eq:pat_coupling} guarantees that the aggregate reserved productwise capacity on each lane is large enough to accommodate the realized cut quantities assigned to that lane.

\section{BDCG: A Hybrid Column-Generation and Benders-Decomposition Framework}
\label{sec:column_generation_bender}
The pattern-based reformulation in Section~\ref{sec:pattern_based_model} introduces two main computational challenges. First, the trimming pattern sets $K_b$ and loading configuration sets $\Gamma_{b,c,v}$ are prohibitively large and cannot be enumerated
explicitly. Second,  the make-to-stock (MTS) demand channel induces a multi-period inventory problem whose value must be reevaluated whenever the supply plan changes. A key structural feature is that the downstream MTS cost depends on the supply-side decisions only through the aggregate supply profile $S_{p,c,t}$. Once $S_{p,c,t}$ is fixed, the downstream problem decomposes by product-customer chain and can be evaluated independently. This structure naturally suggests a decomposition approach that separates supply-side combinatorial decisions from downstream inventory optimization. This section develops a solution framework that exploits this structure by coordinating supply-side column generation with demand-side Benders decomposition. The proposed \emph{Benders Decomposition with Column Generation} framework is referred to as BDCG hereafter; the implementation that solves the pricing subproblems by dynamic programming is denoted BDCG-DP, and the variant with MIP-based pricing is denoted BDCG-MIP.

BDCG adopts a two-phase implementation. First, a linear program (LP) construction phase, shown in Figure~\ref{fig:phase1_workflow}, alternates RMP optimization, supply-side pricing, and demand-side cut generation until no source of LP improvement remains. Phase II Figure~\ref{fig:phase2_workflow}, then fixes the resulting column and cut pools and restores integrality on the retained column-usage variables, yielding an integer Benders refinement loop over the fixed pools. This two-phase design trades a global optimality certificate for tractability: the returned solution is optimal over the Phase-1 column pools (up a tolerance), and its quality is measured against the exact root-LP lower bound established in Phase~1. 

\subsection{Structural Decomposition and Two-Phase Framework}
Let $\mathbf{S}=(\mathbf{S}_{p,c})_{p\in P,\ c\in C^{\mathrm{MTS}}}$ denote a fixed feasible aggregate MTS supply profile, where $\mathbf{S}_{p,c}=(S_{p,c,t})_{t\in T}$, and let $Z^{\mathrm{LP}}_{\mathrm{MTS}}(\mathbf{S})$ denote the resulting minimum downstream MTS LP cost.

\begin{proposition}[Supply-profile decomposition]
\label{prop:supply_profile_decomposition}
For any fixed feasible aggregate MTS supply profile $\mathbf{S}$,
\[
  Z^{\mathrm{LP}}_{\mathrm{MTS}}(\mathbf{S})
  =
  \sum_{p\in P}\sum_{c\in C^{\mathrm{MTS}}} Z^{\mathrm{LP}}_{p,c}(\mathbf{S}_{p,c}).
\]
\end{proposition}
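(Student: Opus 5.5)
The plan is to write out the downstream MTS LP explicitly once $\mathbf{S}$ is fixed, and to show that its constraint matrix is block-diagonal with one block per product-customer pair $(p,c)$. The separable objective then gives the result.

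\textbf{Step 1: define the problem.} Fix $\mathbf{S}$ and define $Z^{\mathrm{LP}}_{\mathrm{MTS}}(\mathbf{S})$ as the LP (integrality on $I^{+},I^{-},l$ relaxed) of minimizing
\[
\sum_{p\in P}\sum_{c\in C^{\mathrm{MTS}}}\sum_{t\in T}\bigl(\mathrm{cost}^{\text{lost}}_{p,c}\,l_{p,c,t}+\psi_{p,c,t}\bigr).
\]
The variables are $(I^{+}_{p,c,t},I^{-}_{p,c,t},l_{p,c,t},\psi_{p,c,t})$. The constraints are the inventory recursion \eqref{eq:pat_mts} with $I^{+}_{p,c,0}=I^{\text{init}}_{p,c}$, the lost-sales bounds \eqref{eq:pat_lost_cap}, the shortfall definition \eqref{aix_eq:ss_def}, the penalty epigraph cuts \eqref{aix_eq:ss_cost_cuts}, and nonnegativity of $I^{+},I^{-}$. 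For each pair $(p,c)$, let $Z^{\mathrm{LP}}_{p,c}(\mathbf{S}_{p,c})$ be the same LP restricted to the variables with that index pair. It uses only the data $d_{p,c,\cdot}$, $ss_{p,c,\cdot}$, $\alpha,\beta$, $I^{\text{init}}_{p,c}$ and $\mathbf{S}_{p,c}$.

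\textbf{Step 2: check that every constraint stays within one pair.} Inspect each constraint family and confirm that it involves variables of a single $(p,c)$ only.
\begin{itemize}
\item \eqref{eq:pat_mts} links $I^{+}_{p,c,t}$, $I^{+}_{p,c,t-1}$ and $l_{p,c,t}$, with $S_{p,c,t}$ entering only as a constant.
\item \eqref{aix_eq:ss_def} and \eqref{aix_eq:ss_cost_cuts} involve only $I^{-}_{p,c,t}$, $I^{+}_{p,c,t}$ and $\psi_{p,c,t}$.
\item \eqref{eq:pat_lost_cap} consists of bounds.
\end{itemize}
The MTO constraints \eqref{eq:pat_mto}, the supply definition \eqref{eq:supply_profile_def} and all supply-side constraints do not appear, because $\mathbf{S}$ is fixed. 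Those constraints are the only place where different $(p,c)$ are coupled through shared runs and patterns. The feasible set is therefore a Cartesian product $\prod_{(p,c)}F_{p,c}(\mathbf{S}_{p,c})$, and the objective is a sum of functions $f_{p,c}$, each of which depends only on its own block.

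\textbf{Step 3: minimize a separable function over a product set.} This is the standard argument, proved by two inequalities.
\begin{itemize}
\item ($\ge$) Any feasible point of the full LP restricts to feasible points of each block. Hence its cost is at least $\sum Z^{\mathrm{LP}}_{p,c}$.
\item ($\le$) Concatenating optimal (or $\varepsilon$-optimal) block solutions gives a feasible point of the full LP whose cost is $\sum Z^{\mathrm{LP}}_{p,c}+\varepsilon$.
\end{itemize}
Letting $\varepsilon\to 0$ gives equality.

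\textbf{Main obstacle.} The only delicate point is well-posedness: each block must be feasible and bounded, so that the values are finite and the sum is meaningful. Feasibility is where the hypothesis "feasible aggregate profile" is used. I would interpret it as meaning that every block $F_{p,c}$ is nonempty, which is equivalent to feasibility of the product set. Alternatively, one can show feasibility directly by setting $l_{p,c,t}=d_{p,c,t}$, which keeps $I^{+}$ nondecreasing from $I^{\text{init}}_{p,c}\ge 0$ whenever $S\ge 0$. Boundedness below holds because $l\ge 0$, $\mathrm{cost}^{\text{lost}}\ge 0$, and $\psi$ is bounded below by $\max_i(\alpha_i I^{-}-\beta_i)$ with $I^{-}\ge 0$ bounded. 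If some block were infeasible, I would adopt the convention $+\infty=+\infty$, and the identity would still hold.
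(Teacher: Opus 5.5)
Your proposal is correct and follows essentially the paper's route: once $\mathbf{S}$ is fixed, no downstream MTS constraint or objective term couples distinct pairs $(p,c)$, so the LP splits into independent chain LPs whose optimal values add, and your product-set argument with two inequalities and your well-posedness check only make explicit what the paper leaves implicit. One small fix: boundedness is cleanest from the nonnegativity $\psi_{p,c,t}\ge 0$ in the paper's variable domain, which you left off your constraint list and which makes every cost term nonnegative, rather than from $I^{-}_{p,c,t}$, which has no upper bound and would otherwise need a sign assumption on the $\alpha$ coefficients.
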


Proposition~\ref{prop:supply_profile_decomposition} yields the decomposition on which BDCG is built.

\begin{figure}[!t]
\centering
\resizebox{0.9\textwidth}{!}{%
\begin{tikzpicture}[
    node distance=1cm and 1cm,
    auto,
    >=Stealth,
    thick,
    font=\footnotesize\sffamily,
    process/.style={
        rectangle, 
        draw=blue!50!black, 
        top color=white, 
        bottom color=blue!5, 
        rounded corners=2pt, 
        minimum width=2.5cm, 
        minimum height=1cm, 
        align=center,
        drop shadow
    },
    decision/.style={
        diamond, 
        draw=orange!50!black, 
        top color=white, 
        bottom color=orange!10, 
        aspect=1.8, 
        inner sep=1pt, 
        text width=2.2cm, 
        align=center, 
        drop shadow
    },
    update/.style={
        rectangle, 
        draw=green!50!black, 
        fill=green!5,
        rounded corners=2pt, 
        minimum width=2.5cm, 
        minimum height=0.8cm, 
        align=center,
        dashed
    },
    terminal/.style={
        rectangle, 
        draw=gray!80, 
        fill=gray!10, 
        rounded corners=2pt, 
        minimum width=3cm, 
        minimum height=1cm, 
        align=center,
        font=\footnotesize\bfseries
    },
    line/.style={draw, ->, very thick, color=black!80}
]

    \node[process] (p1_master) {LP Master\\(RMP)};
    \node[process, right=of p1_master] (p1_benders) {MTS Chain\\Subproblems};
    \node[process, right=of p1_benders] (p1_pricing) {Supply-Side\\Pricing};
    \node[decision, right=of p1_pricing] (p1_check) {Improving\\Columns or Cuts?};
    
    \node[update, below=0.8cm of p1_benders] (p1_update) {Add Columns\\and Cuts};
    
    \node[terminal, right=1.5cm of p1_check, fill=blue!10] (p1_end) {Phase 1\\Converged};

    \draw[line] (p1_master) -- (p1_benders);
    \draw[line] (p1_benders) -- (p1_pricing);
    \draw[line] (p1_pricing) -- (p1_check);
    
    \draw[line] (p1_check.south) |- node[near start, right] {Yes} (p1_update.east);
    \draw[line] (p1_update.west) -| (p1_master.south);
    
    \draw[line] (p1_check.east) -- node[above] {No} (p1_end.west);
\end{tikzpicture}
}
\caption{Phase~1: LP relaxation with simultaneous column and cut generation}
\label{fig:phase1_workflow}
\end{figure}


\begin{figure}[!t]
\centering
\resizebox{0.85\textwidth}{!}{%
\begin{tikzpicture}[
    node distance=1cm and 1cm,
    auto,
    >=Stealth,
    thick,
    font=\footnotesize\sffamily,
    process/.style={
        rectangle, 
        draw=blue!50!black, 
        top color=white, 
        bottom color=blue!5, 
        rounded corners=2pt, 
        minimum width=2.5cm, 
        minimum height=1cm, 
        align=center,
        drop shadow
    },
    decision/.style={
        diamond, 
        draw=orange!50!black, 
        top color=white, 
        bottom color=orange!10, 
        aspect=1.8, 
        inner sep=1pt, 
        text width=2.2cm, 
        align=center, 
        drop shadow
    },
    update/.style={
        rectangle, 
        draw=green!50!black, 
        fill=green!5,
        rounded corners=2pt, 
        minimum width=2.5cm, 
        minimum height=0.8cm, 
        align=center,
        dashed
    },
    terminal/.style={
        rectangle, 
        draw=blue!50!black, 
        fill=blue!20, 
        rounded corners=2pt, 
        minimum width=2.5cm, 
        minimum height=1cm, 
        align=center,
        font=\bfseries,
        drop shadow
    },
    start_node/.style={
        rectangle,
        draw=gray!50,
        dashed,
        fill=gray!5,
        font=\scriptsize\itshape,
        align=center,
        minimum height=1cm
    },
    line/.style={draw, ->, very thick, color=black!80}
]

    \node[start_node] (p2_start) {Input from Phase 1:\\Fixed column pools\\and cut pools};
    \node[process, right=of p2_start] (p2_master) {Integer Master};
    \node[process, right=of p2_master] (p2_benders) {MTS Chain\\Subproblems};
    \node[decision, right=of p2_benders] (p2_check) {Converged?};
    
    \node[update, below=0.8cm of p2_benders] (p2_update) {Add Optimality\\Cuts};
    
    \node[terminal, right=of p2_check] (final) {Final\\Solution};

    \draw[line] (p2_start) -- (p2_master);
    \draw[line] (p2_master) -- (p2_benders);
    \draw[line] (p2_benders) -- (p2_check);
    
    \draw[line] (p2_check.south) |- node[near start, right] {No} (p2_update.east);
    \draw[line] (p2_update.west) -| (p2_master.south);
    
    \draw[line] (p2_check.east) -- node[above] {Yes} (final);

\end{tikzpicture}
}
\caption{Phase~2: Integer Benders refinement over fixed column pools}
\label{fig:phase2_workflow}
\end{figure}


\subsection{Restricted Master Problem (RMP)}
\label{sec:RMP}
The Restricted Master Problem (RMP) solves the pattern-based formulation over restricted column pools $\bar{K}_b \subset K_b$ and $\bar{\Gamma}_{b,c,v} \subset \Gamma_{b,c,v}$. The supply-side constraints remain exact, while the downstream fulfillment cost is represented through Benders optimality cuts.

\begin{subequations} 
\label{form:RMP}
\allowdisplaybreaks
\begin{align}
  \min \quad 
  & \sum_{b \in B} \mathrm{cost}^{\text{run}}_b\,r_b
    + \sum_{b \in B}\sum_{c \in C}\sum_{v \in V} \mathrm{cost}^{\text{veh}}_{b,c,v} \sum_{\gamma \in \bar{\Gamma}_{b,c,v}} y^{\text{load}}_{b,c,v,\gamma} \nonumber \\
  & \quad + \sum_{p \in P} \sum_{c \in C^{\mathrm{MTO}}} \sum_{t \in T} \mathrm{cost}^{\text{lost}}_{p,c}\,l^{\mathrm{MTO}}_{p,c,t}
    + \sum_{p \in P} \sum_{c \in C^{\mathrm{MTS}}} \theta^{\mathrm{MTS}}_{p,c} \label{eq:RMP_obj} \\[6pt]
  \text{s.t.} \quad
  & \sum_{b \in B_{m,t}} r_b \le A_{m,t} 
    && \forall\, m \in M,\ t \in T \label{eq:RMP_cap} \\
  & r_b \le r^{\max}_b
    && \forall\, b \in B \label{eq:RMP_bound} \\
  & r_b \ge h_b \sum_{k \in \bar{K}_b} y^{\text{cut}}_{b,k} 
    && \forall\, b \in B \label{eq:RMP_prod} \\
  & \sum_{\gamma \in \bar{\Gamma}_{b,c,v}} a^{\text{load}}_{b,p,c,v,\gamma}\,y^{\text{load}}_{b,c,v,\gamma}
    \ge \sum_{k \in \bar{K}_b} a_{p,c,v,k}\,y^{\text{cut}}_{b,k} 
    && \forall\, b, p, c, v \label{eq:RMP_link} \\
  & \sum_{k \in \bar{K}_b} a_{p,c,v,k}\,y^{\text{cut}}_{b,k} \ge x^{\min}_{b,p,c,v}
    && \forall\, b, p, c, v \label{eq:RMP_min_ship} \\
  & S_{p,c,t} = \sum_{b \in B_t} \sum_{k \in \bar{K}_b} \sum_{v \in V} a_{p,c,v,k}\,y^{\text{cut}}_{b,k} 
    && \forall\, p, c, t \label{eq:RMP_supply_def} \\
  & S_{p,c,t} + l^{\mathrm{MTO}}_{p,c,t} \ge d_{p,c,t} 
    && \forall\, p,\ c \in C^{\mathrm{MTO}},\ t \label{eq:RMP_mto} \\
  & 0 \le l^{\mathrm{MTO}}_{p,c,t} \le d_{p,c,t}
    && \forall\, p,\ c \in C^{\mathrm{MTO}},\ t \label{eq:RMP_mto_lost_cap} \\
  & \theta^{\mathrm{MTS}}_{p,c} \ge 0
    && \forall\, p \in P,\ c \in C^{\mathrm{MTS}} \label{eq:RMP_theta_lb} \\
  & \theta^{\mathrm{MTS}}_{p,c} \ge \sigma^{h}_{p,c} + \sum_{t\in T} \phi^{h}_{p,c,t}\,S_{p,c,t}
    && \forall\, p \in P,\ c \in C^{\mathrm{MTS}},\ h \in \mathcal{H}_{p,c} \label{eq:RMP_benders} \\
  & y^{\text{cut}}_{b,k}, y^{\text{load}}_{b,c,v,\gamma} \in \mathbb{Z}_+. \nonumber
\end{align}
\end{subequations}

The objective \eqref{eq:RMP_obj} combines production cost, transportation cost, MTO lost-sales penalties, and the epigraph variables $\theta^{\mathrm{MTS}}_{p,c}$ that approximate downstream MTS cost. Here, $l^{\mathrm{MTO}}_{p,c,t}$ denotes lost sales for the MTO channel; the corresponding MTS lost-sales decisions are handled inside the chain subproblems.

Constraints \eqref{eq:RMP_cap}--\eqref{eq:RMP_link} carry forward the physical feasibility logic of the pattern-based model over the current column pools, and \eqref{eq:RMP_min_ship} enforces the minimum shipment obligations. Constraint \eqref{eq:RMP_supply_def} aggregates the generated quantities from the selected trimming patterns into the supply profile $S_{p,c,t}$. Constraints \eqref{eq:RMP_mto}--\eqref{eq:RMP_mto_lost_cap} retain the exact treatment of MTO lost sales from Section~\ref{sec:pattern_based_model}. Constraint \eqref{eq:RMP_theta_lb} provides a valid initial lower bound on downstream MTS cost, and \eqref{eq:RMP_benders} adds the current pool of Benders optimality cuts for each MTS chain $(p,c)$. Here, $\mathbf{S}_{p,c} = (S_{p,c,1}, \dots, S_{p,c,T})$ denotes the supply vector over the horizon; $\mathcal{H}_{p,c}$ indexes the cut pool; and each cut $h\in\mathcal{H}_{p,c}$ is specified by an intercept $\sigma^{h}_{p,c}$ and slope vector $(\phi^{h}_{p,c,t})_{t\in T}$. Together these cuts define a piecewise-linear lower approximation of the downstream LP chain value function $Z^{\mathrm{LP}}_{p,c}(\mathbf{S}_{p,c})$ described next.  In Phase~1, BDCG solves the LP relaxation of \eqref{form:RMP}; in Phase~2, integrality is restored on the retained column-usage variables over the same restricted column pools.
\subsection{Benders Subproblems}
\label{sec:benders_subproblem}

For a fixed $(p,c)$, let $Z^{\mathrm{LP}}_{p,c}(\mathbf{S}_{p,c})$ denote the minimum lost-sales and safety-stock penalty cost of the chain LP over the planning horizon. Let $Z^{\mathrm{IP}}_{p,c}(\mathbf{S}_{p,c})$ denote the value of the same chain problem under the original discrete inventory domains of $I^{+}_{p,c,t}$, $l_{p,c,t}$, and $I^{-}_{p,c,t}$. The LP value function is convex and piecewise linear in $\mathbf{S}_{p,c}$, which makes it amenable to a standard Benders outer approximation in the master. When $\mathbf{S}_{p,c}$ is integral, Lemma~\ref{lem:demand_integrality} below shows that $Z^{\mathrm{LP}}_{p,c}(\mathbf{S}_{p,c}) = Z^{\mathrm{IP}}_{p,c}(\mathbf{S}_{p,c})$.

\subsubsection{Primal Subproblem Formulation}

Fix $(p,c)\in P\times C^{\mathrm{MTS}}$ and treat $\mathbf{S}_{p,c}$ as fixed input data. The MTS subproblem determines the end-of-period inventory $I^{+}_{p,c,t}$, lost sales $l_{p,c,t}$, safety-stock shortfall $I^{-}_{p,c,t}$, and epigraph variables $\psi_{p,c,t}$ that represent the convex piecewise-linear safety-stock penalty. Associated dual multipliers are indicated in parentheses beside the constraints; only the flow-balance multipliers $\pi_{p,c,t}$ are used to construct the Benders cuts below.

Consistent with the exact formulation in Section~\ref{sec:pattern_based_model}, lost sales are bounded above by demand, with nonnegativity enforced by variable domain. This prevents the model from exploiting flow balance equations to artificially generate inventory by recording lost sales in excess of actual demand.
\begin{subequations}
\label{model:demand_primal}
\allowdisplaybreaks
\begin{align}
  Z^{\mathrm{LP}}_{p,c}(\mathbf{S}_{p,c}) = \min \quad
  & \sum_{t \in T} \left( \mathrm{cost}^{\text{lost}}_{p,c}\,l_{p,c,t} + \psi_{p,c,t} \right) \label{eq:demand_obj} \\
  \text{s.t.} \quad
  & I^{+}_{p,c,1} - l_{p,c,1} = I^{\text{init}}_{p,c} + S_{p,c,1} - d_{p,c,1}
    && (\pi_{p,c,1}) \label{eq:demand_flow_init} \\
  & I^{+}_{p,c,t} - I^{+}_{p,c,t-1} - l_{p,c,t} = S_{p,c,t} - d_{p,c,t}
    && \forall\, t > 1 \quad (\pi_{p,c,t}) \label{eq:demand_flow_rec} \\
  & I^{-}_{p,c,t} + I^{+}_{p,c,t} \ge ss_{p,c,t}
    && \forall\, t \in T \quad (\nu_{p,c,t}) \label{eq:demand_ss} \\
  & \psi_{p,c,t} \ge \alpha_{p,c,t,i}\,I^{-}_{p,c,t} - \beta_{p,c,t,i}
    && \forall\, t \in T,\ i \in N \quad (\omega_{p,c,t,i}) \label{eq:demand_cost} \\
  & 0 \le l_{p,c,t} \le d_{p,c,t}
    && \forall\, t \in T \label{eq:demand_lost_cap} \\
  & I^{+}_{p,c,t},\, I^{-}_{p,c,t},\, \psi_{p,c,t} \ge 0
    && \forall\, t \in T. \label{eq:demand_vars}
\end{align}
\end{subequations}


Note that for any nonnegative supply profile $\mathbf{S}_{p,c}$, the subproblem \eqref{model:demand_primal} is feasible and bounded. Feasibility follows because $l_{p,c,t}$ can always be chosen within $[0,d_{p,c,t}]$ and the shortfall variables $I^{-}_{p,c,t}$ can always satisfy the safety-stock constraints. Boundedness follows because all cost terms are nonnegative. Therefore, the demand-side subproblem never generates feasibility cuts; Benders \emph{optimality} cuts are sufficient.

\subsubsection{Construction of Benders Optimality Cuts}

Let $\hat{\mathbf{S}}_{p,c}$ be the supply profile produced by the current master solution, and let $\hat Z^{\mathrm{LP}}_{p,c}=Z^{\mathrm{LP}}_{p,c}(\hat{\mathbf{S}}_{p,c})$ be the corresponding optimal subproblem value. Let $\hat\pi_{p,c,t}$ denote an optimal dual multiplier associated with the flow-balance constraint at time $t$ (i.e., \eqref{eq:demand_flow_init} for $t=1$ and \eqref{eq:demand_flow_rec} for $t>1$).

Because the subproblem is a linear program in which $\mathbf{S}_{p,c}$ enters only through the right-hand side of the flow-balance constraints with coefficient $+1$, the vector $\hat\pi_{p,c}=(\hat\pi_{p,c,1},\dots,\hat\pi_{p,c,T})$ defines a subgradient of the LP value function $Z^{\mathrm{LP}}_{p,c}(\cdot)$ at $\hat{\mathbf{S}}_{p,c}$. The resulting supporting hyperplane yields the Benders optimality cut
\begin{equation}
\label{eq:benders_optimality_cut}
  \theta^{\mathrm{MTS}}_{p,c}
      \;\ge\;
  \hat Z^{\mathrm{LP}}_{p,c}
  \;+\;
  \sum_{t\in T} \hat\pi_{p,c,t}\,\big(S_{p,c,t}-\hat S_{p,c,t}\big),
  \qquad \forall\, p\in P,\ c\in C^{\mathrm{MTS}}.
\end{equation}
Cut \eqref{eq:benders_optimality_cut} is added to the restricted master problem as an additional lower bound on the epigraph variable $\theta^{\mathrm{MTS}}_{p,c}$, thereby tightening the master's approximation of the downstream LP value function.
\subsubsection{Integrality on Integral Supply Profiles}

When the supply profile is integral, the chain problem admits an equivalent incremental formulation with a totally unimodular constraint matrix. This argument uses the equivalent breakpoint representation of the convex penalty; by the discrete penalty-data convention stated in Section~\ref{sec:mathematical_formulation}, those breakpoints are integral.

\begin{lemma}[Integrality on Integral Supply Profiles]
\label{lem:demand_integrality}
Fix a customer-product pair $(p,c)$. Suppose the initial inventory $I^{\text{init}}_{p,c}$, demands $d_{p,c,t}$, safety-stock targets $ss_{p,c,t}$, and the convex piecewise-linear safety-stock penalty admit an equivalent breakpoint representation with integral breakpoints. Then, for any integral supply vector $\mathbf{S}_{p,c}$, subproblem \eqref{model:demand_primal} admits an equivalent incremental linear formulation with a totally unimodular constraint matrix and an integral right-hand side. Consequently, $Z^{\mathrm{LP}}_{p,c}(\mathbf{S}_{p,c}) = Z^{\mathrm{IP}}_{p,c}(\mathbf{S}_{p,c})$.
\end{lemma}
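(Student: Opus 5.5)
We replace the epigraph representation of the convex penalty by an incremental (segment) representation and show that the resulting chain problem has an interval or network-type constraint matrix.

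\textbf{Step 1: incremental form of the penalty.} Let $0=\kappa_0<\kappa_1<\dots<\kappa_n$ be the integral breakpoints of the convex piecewise-linear penalty in the shortfall $I^{-}_{p,c,t}$, with nondecreasing slopes $s_1\le\dots\le s_n$. Introduce segment variables $0\le e_{t,i}\le \kappa_i-\kappa_{i-1}$ with $I^{-}_{p,c,t}=\sum_i e_{t,i}$, and replace $\psi_{p,c,t}$ by $\sum_i s_i e_{t,i}$ plus a constant offset. The last segment is taken unbounded, or bounded by $ss_{p,c,t}$, since the shortfall never exceeds $ss_{p,c,t}$ at an optimum.

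Convexity means the slopes are nondecreasing, so an optimal solution fills the segments in order. Hence the incremental LP and the epigraph LP \eqref{model:demand_primal} have the same optimal value for every real $\mathbf S_{p,c}$. Every integral feasible point of one model maps to an integral point of the other with the same cost, because the breakpoints are integral and greedy filling of integral capacities up to an integral total is integral. So the two models also share the same IP value.

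\textbf{Step 2: total unimodularity.} Substitute $I^{-}_{p,c,t}=\sum_i e_{t,i}$ into \eqref{eq:demand_ss}. The remaining constraints are of three kinds:
\begin{itemize}
\item the flow balances in $I^{+}$ and $l$;
\item the inequalities $\sum_i e_{t,i}+I^{+}_t\ge ss_t$;
\item simple bounds.
\end{itemize}

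I would exhibit this as a network matrix. The flow balances are node-arc incidence rows of a path network with nodes $1,\dots,T$ plus a super node: arc $I^{+}_t$ goes from $t$ to $t+1$ (or to the sink for $t=T$), and $l_t$ is an arc from the super node into $t$.

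The safety-stock rows are the harder part. Each row contains $I^{+}_t$ once, and each $e_{t,i}$ appears only in that row. As a fallback I would argue via Ghouila-Houri: give every safety-stock row sign $+$ and assign flow-balance row signs consistently so that each column's signed sum lies in $\{-1,0,1\}$. The column of $I^{+}_t$ has entries $+1$ in balance row $t$, $-1$ in balance row $t+1$, and $+1$ in safety row $t$. Giving all balance rows the same sign $\epsilon$ yields sums in $\{1,1+\epsilon,\dots\}$; choosing $\epsilon=-1$ for balance rows gives column sums $-1+1+1=1$ or $0$.

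Partitioning any subset of rows requires more care. The cleanest route is to substitute the cumulative-flow change of variables $I^{+}_t = I^{\text{init}}+\sum_{\tau\le t}(S_\tau-d_\tau+l_\tau)$. This eliminates the balance rows, so the safety constraint becomes $\sum_i e_{t,i}+\sum_{\tau\le t} l_\tau\ge ss_t-I^{\text{init}}-\sum_{\tau\le t}(S_\tau-d_\tau)$ together with $I^{+}_t\ge0$. In the $l$-columns, every row then has consecutive ones, since it involves a prefix $\tau\le t$. The $e$-columns are unit columns, and the bounds are identity rows. A consecutive-ones (interval) matrix augmented with unit columns and identity rows is TU. The right-hand sides are integral because $\mathbf S_{p,c}$, the demands, $I^{\text{init}}$, $ss$ and the breakpoints are all integral.

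\textbf{Step 3: conclusion.} Hoffman--Kruskal then gives an integral optimal vertex of the incremental LP. Mapping it back through Step 1 shows $Z^{\mathrm{LP}}_{p,c}(\mathbf S_{p,c})=Z^{\mathrm{IP}}_{p,c}(\mathbf S_{p,c})$.

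The main obstacle is Step 2, namely choosing the variable elimination that makes the consecutive-ones structure visible. Step 1 also needs care in arguing equivalence of values for integral points and not only for LP optima.
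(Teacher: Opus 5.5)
Your final route in Step 2 is exactly the paper's argument: substitute the cumulative inventory expression to eliminate the balance rows, so that both $I^{+}_t\ge 0$ and the safety-stock rows become prefix sums $\sum_{\tau\le t} l_\tau$ (the latter plus single-row incremental penalty columns) with integral right-hand sides, then use the same breakpoint representation of the convex penalty and conclude by Hoffman--Kruskal. The only difference is cosmetic: you read the prefix structure as consecutive ones in rows (an interval matrix with unit columns and unit rows appended), while the paper interleaves the rows to get consecutive ones in columns, which is the transposed form of the same TU fact; the network-matrix and Ghouila-Houri detours are unnecessary.
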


Written in cumulative incremental form, the chain has the consecutive-ones structure that yields total unimodularity. Lemma~\ref{lem:demand_integrality} therefore implies that integral supply profiles can be evaluated exactly by the LP subproblem, without explicit integrality restrictions on the chain variables. This is the setting relevant for Phase~2, because integral retained column-usage variables together with integral column coefficients induce integral supply profiles.

\subsubsection{Validity of Cuts Across Phases}

Each Benders cut \eqref{eq:benders_optimality_cut} is a globally valid supporting hyperplane for the convex LP value function $Z^{\mathrm{LP}}_{p,c}(\cdot)$. Consequently, cuts generated during Phase~1 (LP relaxation) remain valid when integrality is enforced in Phase~2. Since the integer feasible region is a subset of the continuous domain, any lower bound established for the continuous LP value function holds for the discrete points within it. On those integral Phase~2 profiles, Lemma~\ref{lem:demand_integrality} implies that $Z^{\mathrm{LP}}_{p,c}$ and $Z^{\mathrm{IP}}_{p,c}$ coincide. Reusing the accumulated cut pool therefore provides a strengthened initial approximation of the demand-side value function in the integer master, which can reduce the number of Phase~2 iterations.

\subsection{Pricing Subproblems}
\label{sec:pricing}

Column generation is applied to two supply-side column families: trimming patterns $K_b$ and lane-level loading configurations $\Gamma_{b,c,v}$. At each Phase~1 iteration, the LP relaxation of the RMP produces dual information that assigns an implicit value to additional supply-side columns.

For each MTS customer $c\in C^{\mathrm{MTS}}$ and product $p\in P$, the master includes the epigraph variable $\theta^{\mathrm{MTS}}_{p,c}$ and the associated supply profile $\mathbf{S}_{p,c}=(S_{p,c,t})_{t\in T}$. The current cut pool for $(p,c)$ is indexed by $h\in\mathcal{H}_{p,c}$ and consists of inequalities of the form
\[
  \theta^{\mathrm{MTS}}_{p,c} \;\ge\; \sigma^{h}_{p,c} + \sum_{t\in T}\phi^{h}_{p,c,t}\,S_{p,c,t}.
\]
Let $\xi^{h}_{p,c}\ge 0$ denote the dual multiplier of cut $h$ in the LP master. The combined marginal value of one additional unit of supply in period $t$ is therefore
\begin{equation}
  \kappa_{p,c,t}
  \;:=\;
  -\sum_{h\in\mathcal{H}_{p,c}} \xi^{h}_{p,c}\,\phi^{h}_{p,c,t},
  \qquad t\in T.
  \label{eq:price_from_cuts}
\end{equation}
This quantity is the channel through which the downstream LP chain value function influences Phase~1 pricing. Let $\rho_{p,c,t}$ denote the optimal dual multiplier of the supply-definition constraint \eqref{eq:RMP_supply_def}. Because $S_{p,c,t}$ is an unrestricted variable with zero objective coefficient that appears only in \eqref{eq:RMP_supply_def}, the MTO constraints \eqref{eq:RMP_mto}, and the cuts \eqref{eq:RMP_benders}, LP optimality forces its reduced cost to zero. Written out, this zero-reduced-cost condition is the dual identity
\begin{equation}
  \label{eq:supply_dual_identity}
  \rho_{p,c,t} \;=\; -\kappa_{p,c,t} \quad (c \in C^{\mathrm{MTS}}),
  \qquad
  \rho_{p,c,t} \;=\; -\zeta_{p,c,t} \quad (c \in C^{\mathrm{MTO}}),
\end{equation}
where $\zeta_{p,c,t}$ is the MTO fulfillment dual defined below: the supply dual transmits the downstream marginal value of one unit of period-$t$ supply, unchanged, to the supply side. Substituting \eqref{eq:supply_dual_identity} into the reduced cost of a trimming-pattern column yields the pricing conditions stated next.

\subsubsection{Trimming pricing problem (TPP; per run)}

Fix a run $b\in B_t$ for some period $t\in T$. Let $\mu_b$ be the dual multiplier associated with the run-time linking
constraint \eqref{eq:RMP_prod}, let $\lambda_{b,p,c,v}$ be the dual multiplier of the transportation coupling constraint
\eqref{eq:RMP_link}, and let $\eta_{b,p,c,v}\ge 0$ denote the dual multiplier of the minimum-shipment constraint
\eqref{eq:RMP_min_ship}. For MTO customers, let $\zeta_{p,c,t}\ge 0$ denote the dual multiplier of the MTO fulfillment
constraint \eqref{eq:RMP_mto}. For MTS customers, the MTS cut pool induces the per-period shipment credit $\kappa_{p,c,t}$ defined
in \eqref{eq:price_from_cuts}. Define the lane-specific net unit value
\begin{equation}
\label{eq:trim_net_value}
  \delta_{b,p,c,v}
  :=
  \begin{cases}
    \kappa_{p,c,t} - \lambda_{b,p,c,v} + \eta_{b,p,c,v}, & c \in C^{\mathrm{MTS}},\\
    \zeta_{p,c,t} - \lambda_{b,p,c,v} + \eta_{b,p,c,v}, & c \in C^{\mathrm{MTO}}.
  \end{cases}
\end{equation}
The TPP for run $b$ is
\begin{equation}
\label{eq:bdcg_trim_pricing}
  Z^{\text{TPP}}_b
  \;=\;
  \max \left\{
    \sum_{p \in P} \sum_{c \in C} \sum_{v \in V}
      \delta_{b,p,c,v}\,a_{p,c,v}
    \;:\;
    \begin{aligned}
      & \sum_{p \in P} \sum_{c \in C} \sum_{v \in V} w_p\,a_{p,c,v} \le W_b
    \end{aligned},
    \; a_{p,c,v} \in \mathbb{Z}_+
  \right\}.
    \end{equation}
An improving trimming pattern is found whenever $Z^{\text{TPP}}_b > h_b\,\mu_b$; an optimizer $a^\star$ yields a new pattern
column with coefficients $a_{p,c,v,k}=a^\star_{p,c,v}$.

\subsubsection{Loading pricing problem (LPP; per lane)}
The LPP uses the same coupling dual multipliers $\lambda_{b,p,c,v}$ from \eqref{eq:RMP_link} to generate an improving loading configuration:
    \begin{equation}
\label{eq:bdcg_load_pricing}
  Z^{\text{LPP}}_{b,c,v}
  \;=\;
  \max \left\{
    \sum_{p \in P} \lambda_{b,p,c,v}\,a^{\text{load}}_{p}
    \;:\;
    \begin{aligned}
      & \sum_{p \in P} g_{b,p}\,a^{\text{load}}_{p} \le G_v \\
      & \sum_{p \in P} q_p\,a^{\text{load}}_{p} \le Q_v
    \end{aligned},
    \; a^{\text{load}}_{p} \in \mathbb{Z}_+
  \right\}.
    \end{equation}
An improving loading configuration is found whenever $Z^{\text{LPP}}_{b,c,v} > \mathrm{cost}^{\text{veh}}_{b,c,v}$.

\subsubsection{Dynamic Programming for Trimming and Loading Pricing}
\label{sec:pricing_dp}

Both pricing problems are solved repeatedly during Phase~1 to identify columns with negative reduced cost. Once the master dual multipliers are fixed, trimming pricing and loading pricing reduce to exact integer knapsack models. Because all capacity coefficients in these pricing problems are measured in discrete units, as mentioned in Section~\ref{sec:mathematical_formulation}, the resulting knapsacks can be solved exactly by dynamic programming.

\textbf{Trimming pricing (1-dimensional knapsack)}
Consider the trimming pricing problem (TPP) for a fixed production run $b$,
defined in~\eqref{eq:bdcg_trim_pricing}.
TPP determines how a single jumbo roll should be cut and is subject to a
single capacity constraint: the deckle width $W_b$.
Each unit of product $p$ consumes width $w_p$, while contributing a net
unit reduced cost $\delta_{b,p,c,v}$ depending on the customer--lane pair
$(c,v)$.
TPP therefore has the structure of a \emph{one-dimensional integer knapsack
problem}.

While the net unit value $\delta_{b,p,c,v}$ depends on $(c,v)$, all units of
product $p$ consume the same width.
As a result, when determining how many units of each product to cut,
product $p$ can be represented by its best available net unit value
\[
  \bar\delta_{b,p} := \max_{c\in C,\,v\in V} \delta_{b,p,c,v}.
\]
\begin{lemma}[Productwise reduction of trimming pricing]
\label{lem:trim_productwise_reduction}
For a fixed run $b$, the optimal value of trimming pricing problem \eqref{eq:bdcg_trim_pricing} equals the optimal value of
\[
  \max \left\{
    \sum_{p\in P} \bar\delta_{b,p}\,x_p
    \;:\;
    \sum_{p\in P} w_p x_p \le W_b,\;
    x_p\in\mathbb{Z}_+
  \right\},
\]
Moreover, any optimal knapsack solution can be lifted to an optimal trimming pattern by assigning each product $p$ to any customer-lane pair attaining $\bar\delta_{b,p}$.
\end{lemma}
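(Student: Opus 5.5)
The plan is to prove the equality of optimal values by two inequalities, using the aggregation map $x_p = \sum_{c\in C}\sum_{v\in V} a_{p,c,v}$ in one direction and a lifting map in the other. The observation driving everything is that the single constraint in \eqref{eq:bdcg_trim_pricing} depends on the pattern only through the productwise totals $x_p$. Indeed, $\sum_{p}\sum_{c}\sum_{v} w_p a_{p,c,v} = \sum_p w_p x_p$, because the width $w_p$ does not depend on $(c,v)$. Since $C$ and $V$ are finite, $\bar\delta_{b,p}$ is a well-defined maximum attained by some pair $(c^\star_p,v^\star_p)$.

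\textbf{Upper bound.} Take any feasible $a$ for \eqref{eq:bdcg_trim_pricing} and set $x_p=\sum_{c,v}a_{p,c,v}\in\mathbb{Z}_+$. By the identity above, $x$ is feasible for the reduced knapsack. Because each $a_{p,c,v}\ge 0$ and $\delta_{b,p,c,v}\le\bar\delta_{b,p}$, we get
\[
\sum_{p,c,v}\delta_{b,p,c,v}a_{p,c,v}\le\sum_p\bar\delta_{b,p}\sum_{c,v}a_{p,c,v}=\sum_p\bar\delta_{b,p}x_p .
\]
Hence $Z^{\text{TPP}}_b$ is at most the reduced optimum.

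\textbf{Lower bound and lifting.} Take any feasible $x$ for the reduced knapsack. Define $a_{p,c^\star_p,v^\star_p}=x_p$ and set all other $a_{p,c,v}=0$. This $a$ is a nonnegative integer vector, it has the same width usage as $x$, and its objective value is exactly $\sum_p\bar\delta_{b,p}x_p$. So the reduced optimum is at most $Z^{\text{TPP}}_b$, which gives equality of the two values. Applying the same construction to an optimal $x$ produces a feasible $a$ whose value equals the common optimum, so that $a$ is an optimal trimming pattern. This proves the lifting claim. The construction works for any choice of maximizing pair, and it also works if $x_p$ is split among several maximizing pairs.

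\textbf{Where care is needed.} There is no real obstacle here. The one thing to check is that both optima exist and are finite. Both feasible sets are finite, since $w_p>0$ bounds every $x_p$ by $W_b/w_p$, and both contain the zero vector, so both maxima are attained. One caveat concerns products with $w_p\le 0$ or with a negative $\bar\delta_{b,p}$. A negative $\bar\delta_{b,p}$ is harmless, because the optimum simply sets $x_p=0$. The positivity of widths should be stated explicitly as an assumption, since the paper treats widths as physical lengths.
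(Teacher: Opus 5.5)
Your proof is correct and follows the same route as the paper: you aggregate a pattern into productwise counts $x_p=\sum_{c,v}a_{p,c,v}$ to get the upper bound, then lift a knapsack solution by placing all of $x_p$ on a maximizing pair $(c^\star_p,v^\star_p)$ to get the matching lower bound and the lifting claim. Your remarks on attainment of the maxima (finiteness from $w_p>0$, feasibility of the zero vector) go slightly beyond the paper, which takes the existence of both optima for granted.
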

Lemma~\ref{lem:trim_productwise_reduction} reduces trimming pricing to a
standard one-dimensional integer knapsack problem.
This knapsack is solved exactly by a DP over scaled deckle width.
Once the optimal product counts $x_p$ are obtained, they are mapped back to
customer--lane pairs achieving $\bar\delta_{b,p}$.

\textbf{Loading pricing (two-dimensional knapsack)}
The loading pricing problem (LPP) for a fixed lane $(b,c,v)$, defined in
\eqref{eq:bdcg_load_pricing}, determines which products are loaded onto a
single vehicle.
Each unit of product $p$ consumes vehicle weight $g_{b,p}$ and loading
positions $q_p$, subject to capacity limits $G_v$ and $Q_v$.
LPP therefore has the structure of a \emph{bounded two-dimensional integer
knapsack problem}.

An exact DP over the two capacity dimensions is used to solve LPP.
Loading positions are handled via the same integer scaling as in trimming
pricing, while cumulative weight is tracked exactly in the DP state.

\subsubsection{Termination and LP optimality of Phase~1}
At termination, the current restricted master has already exhausted all sources of LP improvement. When neither pricing nor cut separation yields further improvement, Phase~1
terminates.

\begin{proposition}[LP optimality of Phase~1]
\label{prop:ph1_lp_optimality}
For each $(p,c)\in P\times C^{\mathrm{MTS}}$, let $\mathcal{T}_{p,c}$ denote the family of all supporting hyperplanes of $Z^{\mathrm{LP}}_{p,c}(\cdot)$. Suppose each Phase~1 restricted master LP is solved to optimality, the trimming and loading pricing subproblems are solved exactly, and demand-side separation returns a violated cut from $\mathcal{T}_{p,c}$ whenever $(\mathbf{S}_{p,c},\theta^{\mathrm{MTS}}_{p,c})$ violates the epigraph of $Z^{\mathrm{LP}}_{p,c}$. If Phase~1 terminates with no negative-reduced-cost trimming pattern, no negative-reduced-cost loading configuration, and no violated demand-side cut, then the current solution is optimal for the LP relaxation of the decomposed master over the full supply-side column families and the full cut families $\{\mathcal{T}_{p,c}\}$.
\end{proposition}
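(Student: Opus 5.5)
We argue by LP duality, via the standard certificate for simultaneous column-and-row generation. Let (F) be the LP relaxation of the decomposed master over the full families $K_b$, $\Gamma_{b,c,v}$ and full cut families $\{\mathcal{T}_{p,c}\}$. Let (R) be the terminal restricted master LP over $\bar K_b$, $\bar\Gamma_{b,c,v}$ and the finite cut pools $\mathcal{H}_{p,c}\subseteq\mathcal{T}_{p,c}$. Let $x^\star=(r,y^{\text{cut}},y^{\text{load}},S,l^{\mathrm{MTO}},\theta)$ be the optimal primal solution of (R) and $u^\star=(\mu,\lambda,\eta,\rho,\zeta,\xi,\dots)$ an optimal dual. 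We prove two things: (i) $x^\star$, padded with zeros for the ungenerated columns, is feasible for (F); and (ii) $u^\star$, padded with zero multipliers on the cuts of $\mathcal{T}_{p,c}\setminus\mathcal{H}_{p,c}$, is dual feasible for (F) and has the same objective value. Weak duality then gives optimality of $x^\star$ in (F).

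For (i), every supply-side constraint of (F) is unchanged when the new column variables are set to zero. For the cuts, the no-violated-cut hypothesis says that $(\mathbf S_{p,c},\theta^{\mathrm{MTS}}_{p,c})$ lies in the epigraph of $Z^{\mathrm{LP}}_{p,c}$, i.e. $\theta^{\mathrm{MTS}}_{p,c}\ge Z^{\mathrm{LP}}_{p,c}(\mathbf S_{p,c})$. Every hyperplane in $\mathcal{T}_{p,c}$ minorizes $Z^{\mathrm{LP}}_{p,c}$, because the function is convex and the hyperplanes support it. Hence all cuts in $\mathcal{T}_{p,c}$ hold at $x^\star$. We also need the epigraph of the convex piecewise-linear $Z^{\mathrm{LP}}_{p,c}$ to equal the intersection of its supporting halfspaces. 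This holds because the subproblem is feasible and bounded for every profile, as noted after \eqref{model:demand_primal}, so the function is finite-valued and convex. Combining this with Proposition~\ref{prop:supply_profile_decomposition} shows that (F) with full cut families correctly encodes the total downstream MTS cost.

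For (ii), the dual constraints of (F) correspond to primal columns. They split into three groups.
\begin{itemize}
\item Variables already present in (R), namely $r,S,l^{\mathrm{MTO}},\theta$ and the generated columns. Their reduced-cost conditions hold because $u^\star$ is dual optimal for (R) and the added cuts carry zero multipliers.
\item New trimming columns $k\in K_b\setminus\bar K_b$. Their reduced cost is $h_b\mu_b-\sum_{p,c,v}\delta_{b,p,c,v}a_{p,c,v,k}$. To obtain the form \eqref{eq:trim_net_value}, we substitute the identity \eqref{eq:supply_dual_identity} into the column's coefficients in \eqref{eq:RMP_prod}, \eqref{eq:RMP_link}, \eqref{eq:RMP_min_ship} and \eqref{eq:RMP_supply_def}. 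Since $K_b$ is exactly the set of integer vectors feasible for \eqref{eq:bdcg_trim_pricing}, exact pricing with $Z^{\text{TPP}}_b\le h_b\mu_b$ (via Lemma~\ref{lem:trim_productwise_reduction}) gives nonnegative reduced cost for every such column.
\item New loading columns. In the same way, $Z^{\text{LPP}}_{b,c,v}\le \mathrm{cost}^{\text{veh}}_{b,c,v}$ gives nonnegative reduced cost.
\end{itemize}

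Equal objective values follow from strong duality for (R) together with the zero padding. The padded primal has the same cost as $x^\star$. The padded dual adds zero terms, so its objective equals that of $u^\star$.

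The main obstacle is bookkeeping the sign conventions in the trimming reduced cost. Specifically, we must verify that the MTS credit enters as $\kappa_{p,c,t}$ through $\rho_{p,c,t}=-\kappa_{p,c,t}$, and that a cut's contribution vanishes when its multiplier is zero. The plan is to write out the full dual of (R) once, read off the stationarity condition for $S_{p,c,t}$ to recover \eqref{eq:supply_dual_identity}, and then carry it into the column's reduced cost.
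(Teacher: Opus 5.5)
Your proposal is correct and is essentially the paper's argument written as a single primal--dual certificate. The paper likewise uses exact pricing to obtain the Dantzig--Wolfe reduced-cost condition over the full families $K_b,\Gamma_{b,c,v}$ (you spell this out via \eqref{eq:supply_dual_identity}), and likewise uses $\bar\theta^{\mathrm{MTS}}_{p,c}\ge Z^{\mathrm{LP}}_{p,c}(\bar{\mathbf{S}}_{p,c})$ to get feasibility for every cut in $\mathcal{T}_{p,c}$; where you pad the dual with zero multipliers and invoke weak duality, it uses the equivalent observation that adding cuts only restricts an LP the point already solves.

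Your extra claim that the epigraph equals the intersection of supporting halfspaces is not needed. Its justification also overreaches: the chain LP is guaranteed feasible only for nonnegative profiles, and it is infeasible, e.g., when $I^{\text{init}}_{p,c}+S_{p,c,1}<0$.
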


\subsection{Phase~2: Integer Benders refinement}
\label{sec:ph2_integer_refinement}

Phase~2 fixes the supply-side column pools generated in Phase~1 and enforces integrality on the retained column-usage variables. The method therefore reduces to an integer Benders refinement loop over a fixed set of trimming patterns, loading configurations, and accumulated cuts. No additional supply-side columns are generated in this phase. 
Because the retained column-usage variables are integral in this phase and all column coefficients are integral, the induced supply profiles are integral as well. By Lemma~\ref{lem:demand_integrality}, the LP evaluation of each inventory chain therefore coincides with its integer value $Z^{\mathrm{IP}}_{p,c}$, and downstream costs are evaluated exactly
throughout Phase~2.

Phase~2 is initialized from a feasible incumbent, followed by an initial pass
of demand-side cut separation in which violated Benders optimality cuts are added
until the incumbent satisfies all chain constraints.
This step improves the quality of the initial solution without altering the
Phase~2 master formulation, the set of retained columns, or the refinement logic.

\subsubsection{Refinement loop}
\label{sec:ph2_refine_loop}

Let $\ell=0,1,2,\dots$ index Phase~2 outer iterations. At iteration $\ell$, the integer master (RMP with fixed columns and the current cut pools) returns an incumbent solution $(y^\ell,\hat S^\ell,\hat\theta^\ell)$. For each $(p,c)\in P\times C^{\mathrm{MTS}}$, the corresponding inventory chain is evaluated at $\hat{\mathbf{S}}^\ell_{p,c}$.

\[
  \hat{\mathbf{S}}^{\ell}_{p,c}:=(\hat S^{\ell}_{p,c,t})_{t\in T}\in\mathbb{R}^{|T|}_+
  \qquad\text{and}\qquad
  \hat\theta^{\mathrm{MTS},\ell}_{p,c}.
\]
A chain is violated at iteration $\ell$ if
\begin{equation}
\label{eq:ph2_violation_test}
  Z^{\mathrm{LP}}_{p,c}(\hat{\mathbf{S}}^{\ell}_{p,c}) - \hat\theta^{\mathrm{MTS},\ell}_{p,c} \;>\; \varepsilon_{\mathrm{viol}}.
\end{equation}

Algorithm~\ref{alg:ph2_refinement} summarizes the fixed-pool refinement loop. Since pricing is disabled, the only constraints added in Phase~2 are demand-side chain cuts, and the supply-side column set remains fixed. In the computational study, this loop is terminated early if the remaining wall-clock budget is exhausted, in which case the best incumbent found so far is returned.

\begin{algorithm}[t]
\caption{Phase~2 integer Benders refinement (fixed columns)}
\label{alg:ph2_refinement}
\begin{algorithmic}[1]
\Require Fixed column pools from Phase~1; current cut pools $\{\mathcal{H}_{p,c}\}$; tolerance $\varepsilon_{\mathrm{viol}}$
\Repeat
  \State Solve the integer master (fixed columns, current cut pools) to obtain incumbent $(y,\hat{\mathbf{S}},\hat\theta^{\mathrm{MTS}})$
  \ForAll{$(p,c)\in P\times C^{\mathrm{MTS}}$}
    \State Solve chain LP \eqref{model:demand_primal} at $\mathbf{S}_{p,c}=\hat{\mathbf{S}}_{p,c}$ to obtain $Z^{\mathrm{LP}}_{p,c}(\hat{\mathbf{S}}_{p,c})$
    \If{$Z^{\mathrm{LP}}_{p,c}(\hat{\mathbf{S}}_{p,c})-\hat\theta^{\mathrm{MTS}}_{p,c}>\varepsilon_{\mathrm{viol}}$}
      \State Add one or more valid tangent cuts to $\mathcal{H}_{p,c}$
    \EndIf
  \EndFor
\Until{no chain violates \eqref{eq:ph2_violation_test}}
\State \Return incumbent $(y,\hat{\mathbf{S}},\hat\theta^{\mathrm{MTS}})$
\end{algorithmic}
\end{algorithm}


\begin{proposition}[Finite termination of Phase~2 refinement (fixed columns)]
\label{prop:ph2_finite_termination}
Assume Phase~2 is solved over fixed column pools and that each integer master problem is solved to optimality. Under Constraints~\eqref{eq:RMP_bound}, \eqref{eq:RMP_prod}, and the auxiliary finite-index conditions recorded in the Electronic Companion, the retained integer column-usage variables then range over a finite set. Assume also that every chain LP \eqref{model:demand_primal} is feasible and bounded for all $\mathbf{S}_{p,c}\ge 0$. If at each iteration $\ell$ at least one valid tangent Benders optimality cut is added for every violated chain, then the Phase~2 refinement loop terminates in finitely many iterations (up to tolerance $\varepsilon_{\mathrm{viol}}$).
\end{proposition}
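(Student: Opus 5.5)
The argument is the standard finite-convergence proof for Benders decomposition over a finite first-stage set. It has three steps: the integer master has only finitely many feasible supply profiles, every added cut is exact at the profile that triggered it, and therefore no profile can be violated twice.

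\textbf{Step 1: finiteness.} In Phase~2 the column pools $\bar K_b$ and $\bar\Gamma_{b,c,v}$ are fixed and finite.

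For the cutting variables, \eqref{eq:RMP_prod} and \eqref{eq:RMP_bound} give $h_b\sum_k y^{\text{cut}}_{b,k}\le r_b\le r^{\max}_b$. Since the $y^{\text{cut}}_{b,k}$ are nonnegative integers, each one lies in $\{0,\dots,\lfloor r^{\max}_b/h_b\rfloor\}$.

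For the loading variables, the vehicle counts have a positive cost, so any optimal master needs only finitely many vehicles. I will make this precise by invoking the auxiliary finite-index conditions from the Electronic Companion, i.e.\ an a priori upper bound on each $y^{\text{load}}_{b,c,v,\gamma}$ such as $|J_{b,c,v}|$.

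It follows that the integer vector $y$ ranges over a finite set $\mathcal{Y}$. Each supply profile $\hat{\mathbf S}$ is a linear image of $y$ through \eqref{eq:RMP_supply_def}, so the set $\mathcal{S}$ of attainable profiles is finite as well. The profiles are also integral, so Lemma~\ref{lem:demand_integrality} applies whenever an exact evaluation is needed.

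\textbf{Step 2: cut exactness.} Suppose that at iteration $\ell$ chain $(p,c)$ is violated at $\hat{\mathbf S}^\ell_{p,c}$. The added tangent cut \eqref{eq:benders_optimality_cut}, built from an optimal dual $\hat\pi$ of \eqref{model:demand_primal}, satisfies two properties.
\begin{itemize}
\item It is globally valid, because $Z^{\mathrm{LP}}_{p,c}$ is convex, which follows from LP duality and the assumed feasibility and boundedness for all $\mathbf S_{p,c}\ge0$.
\item It is tight at the point that generated it: its value at $\hat{\mathbf S}^\ell_{p,c}$ equals $Z^{\mathrm{LP}}_{p,c}(\hat{\mathbf S}^\ell_{p,c})$.
\end{itemize}
Validity ensures that cuts never remove the true epigraph, so the integer master always remains a relaxation and stays feasible. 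Tightness ensures that at any later iteration $\ell'>\ell$ with $\hat{\mathbf S}^{\ell'}_{p,c}=\hat{\mathbf S}^{\ell}_{p,c}$, the master forces $\hat\theta^{\ell'}_{p,c}\ge Z^{\mathrm{LP}}_{p,c}(\hat{\mathbf S}^{\ell}_{p,c})$. Chain $(p,c)$ therefore cannot be violated again at that profile.

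\textbf{Step 3: counting.} Consider the pairs $((p,c),\mathbf s)$ with $\mathbf s$ in the projection $\mathcal S_{p,c}$. Each non-terminating iteration adds a cut for at least one chain $(p,c)$ at a profile $\mathbf s$ where that chain has never been cut before. This consumes one of at most $\sum_{p,c}|\mathcal S_{p,c}|<\infty$ pairs. Hence the loop stops after finitely many iterations, at which point no chain violates \eqref{eq:ph2_violation_test}. The tolerance $\varepsilon_{\mathrm{viol}}$ only makes termination earlier, and the returned solution is $\varepsilon$-optimal over the fixed pools because the master value is a lower bound.

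\textbf{Main obstacle.} The delicate point is the finiteness of the loading variables in Step~1. The constraints \eqref{eq:RMP_link} bound $y^{\text{load}}$ only from below, so without an explicit cap the feasible set is not finite. I would first try to argue that an optimal master never uses more than the minimal number of vehicles needed to cover the cut quantities, because $\mathrm{cost}^{\text{veh}}>0$ and the $\theta$ terms do not depend on $y^{\text{load}}$. That reduction lets me restrict to a bounded box. Failing that, I would rely directly on the Electronic Companion's finite-index condition, which the statement explicitly permits.
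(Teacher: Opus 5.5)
Your proof is correct and follows essentially the same route as the paper. Finiteness of the integer incumbents comes from \eqref{eq:RMP_bound}, \eqref{eq:RMP_prod} and the companion's lane-level vehicle cap, justified as you anticipate by $\mathrm{cost}^{\text{veh}}_{b,c,v}>0$; each tangent cut is tight at the profile that generated it; and a counting bound on violation events finishes the argument. Your count over distinct (chain, profile) pairs is a slightly sharper version of the paper's $|\mathcal{Y}|\cdot|C^{\mathrm{MTS}}\times P|$ bound. One aside, outside the statement itself: at termination the incumbent is only guaranteed to lie within $|P\times C^{\mathrm{MTS}}|\,\varepsilon_{\mathrm{viol}}$ of the fixed-pool optimum, not within $\varepsilon_{\mathrm{viol}}$, because the tolerance is applied chain by chain.
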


\emph{Remark.} Proposition~\ref{prop:ph2_finite_termination} analyzes the refinement loop under the assumption that each integer master problem is solved to proven optimality before the next round of cuts is added. In the computational study of Section~\ref{sec:case_study}, each integer master problem is instead solved to a 2\% relative MIP-gap target and the loop stops when the wall-clock budget expires. Under this protocol, the returned incumbent remains integer-feasible and the Phase-1 root LP remains a valid lower bound, but optimality over the fixed column pools is not certified.
\section{Real-world Case Study}
\label{sec:case_study}

This section presents computational results that demonstrate the potential of the proposed BDCG-DP framework when evaluated on real-world industrial instances.

\subsection{Data Overview}
\label{subsec:case_instances}
The industrial benchmark evaluated in this study spans nine months and includes over 6,000 products and approximately 1,000 customers. The scale of these instances exceeds that of prior integrated studies in the paper industry, which consider smaller product sets, shorter planning horizons, or only partial integration of supply chain decisions \citep{CSPlotsizing2008,leao2017decomposition,pierini2021analysis,furlan2024matheuristic}. The real data model the full production-to-fulfillment pipeline over a rolling eight-week planning horizon. From each eight-week dataset, shorter planning-horizon instances are reconstructed to reflect standard industry practice and to enable a systematic evaluation of scalability of the proposed solution approach. Specifically,  one-week ($T_1$), two-week ($T_2$), four-week ($T_4$), and eight-week ($T_8$) instances are generated by restricting the decision horizon accordingly. This stratification yields a comprehensive test suite of 58 instances, consisting of 20 at $T_1$, 18 at $T_2$, 14 at $T_4$, and 6 at $T_8$. 

Table~\ref{tab:case_problem_size} summarizes the main dimensions of the problem across different horizon lengths. The dominant source of computational complexity is the number of feasible run--product--customer--vehicle ($b,p,c,v$) combinations. As the planning horizon increases, this combinatorial space grows  from 0.22M combinations at $T_1$ to 1.83M at $T_8$ (Table~\ref{tab:case_problem_size}), which increases the number of decision variables and constraints and makes the integrated problem challenging to solve.

\begin{table}[t]
  \centering
  \caption{Mean benchmark instance dimensions by horizon.}
  \label{tab:case_problem_size}
  {\scriptsize
  \setlength{\tabcolsep}{4.5pt}
  \renewcommand{\arraystretch}{1.05}
  \begin{tabular}{lrrr}
    \toprule
    \textbf{$T$ (n)} & \textbf{Production runs} & \textbf{Customer orders} & \textbf{Feasible run--product--customer--vehicle combinations}\\
    \midrule
    $T_1$ (20) & 528 & 8.9K & 0.22M \\
    $T_2$ (18) & 1079 & 17.9K & 0.45M \\
    $T_4$ (14) & 2200 & 36.2K & 0.92M \\
    $T_8$ (6)  & 4361 & 71.8K & 1.83M \\
    \bottomrule
  \end{tabular}
  }
\end{table}

\subsection{Solution Methods and Evaluation Criteria}
\label{subsec:case_setup}
This paper evaluates three solution methods: 
\begin{itemize}
    \item CG-MIP is a pure column generation where supply-side pricing subproblems are solved using a general-purpose MIP solver. Phase~1 corresponds to the LP column-generation stage, where the RMP and pricing subproblems are solved iteratively. Phase~2 consists of a final integer solve over the set of generated columns.
    \item CG-DP is a pure column generation where pricing subproblems are solved by dynamic programming (DP). Phase~1 and Phase~2 follow the same structure as in CG-MIP.
    \item BDCG-DP  uses the two-phase BDCG proposed in Section~\ref{sec:column_generation_bender} and solves pricing subproblems by DP. Phase~1 is the LP
decomposition stage, and Phase~2 is the integer Benders refinement over the retained columns.
\end{itemize}

A fourth configuration, BDCG-MIP, which pairs the BDCG architecture with MIP-based pricing, is evaluated in the Electronic Companion to separate the contribution of the pricing solver from that of the decomposition architecture.

To ensure a consistent basis for comparison across the three methods, solution quality is measured relative to a common theoretical lower bound $Z^{\mathrm{LP}}_{\mathrm{root}}$. This bound is obtained by solving the continuous LP relaxation of the full integrated model via column generation until reduced cost optimality is certified. Because this relaxation provides a valid lower bound for the original problem, it serves as a fixed, universal reference point for each instance. When evaluating the final integer solution produced by any of the three algorithms, the reported metric is $\mathrm{Gap}^{\mathrm{LP}} := \frac{Z^{\mathrm{IP}} - Z^{\mathrm{LP}}_{\mathrm{root}}}{\max\{1,|Z^{\mathrm{LP}}_{\mathrm{root}}|\}}\times 100\%$, where $Z^{\mathrm{IP}}$ is the final integer objective achieved by that algorithm. A lower $\mathrm{Gap}^{\mathrm{LP}}$ therefore indicates a solution closer to the theoretical lower bound. For every instance, the column-generation procedure certifies reduced-cost optimality of this relaxation within the Phase-1 budget as shown in Appendix Table~\ref{tab:ec_four_method_phase1}.

\subsection{Implementation Environment}

All codes are implemented in Python~3.10.16 and solved with Gurobi~12.0.1. The benchmark runs are executed
as single-task Slurm jobs on matched Gold~6226 compute nodes with 24 CPUs allocated per task.
All methods start from the same
common initial trimming-pattern set and face the same total 8-hour wall-clock budget. For all methods, Phase~1 is capped at
4 hours, and any unused Phase~1 time is inherited by Phase~2 or the final integer solve.  All
final integer solves use the same 2\% terminal MIP-gap target. The Electronic Companion reports the common initial-pattern construction, supplementary comparison tables, and the Phase~2 warm-start procedure.

\subsection{Computational Results}
\label{subsec:case_results}


\subsubsection{Short Planning Horizon Results}
At $T_1$ and $T_2$, the proposed BDCG-DP framework delivers median runtime reductions by factors of 16.3 and 8.4 relative to the standard CG-MIP baseline. Table~\ref{tab:case_summary_metrics} shows that simply replacing generic MIP pricing with exact DP pricing, as demonstrated by the CG-DP method, drops the median end-to-end runtime from 30.5 to 2.2 minutes at $T_1$ and from 51.0 to 6.5 minutes at $T_2$. Because BDCG-DP incorporates this same exact pricing mechanism, it achieves similar accelerations in these short-horizon settings. Specifically, BDCG-DP runs 16.3 times faster than CG-MIP at $T_1$ and 8.4 times faster at $T_2$. The final objective statistics in Table~\ref{tab:case_summary_metrics} further show that these runtime improvements do not come at the expense of solution quality. Across all three methods, final objective values are nearly identical. For these shorter planning windows, exact pricing is the primary engine of acceleration, allowing the framework to run to completion within minutes.

\begin{table*}[t]
  \centering
  \caption{Median end-to-end runtime, final objective, and root-LP gap by horizon.}
  \label{tab:case_summary_metrics}
  {\scriptsize
  \setlength{\tabcolsep}{4.0pt}
  \renewcommand{\arraystretch}{1.08}

  \textbf{End-to-end runtime (minutes)}

  \vspace{0.3em}

  \begin{tabular*}{\textwidth}{@{\extracolsep{\fill}}lccccc}
    \toprule
    \textbf{$T$ (n)} & \textbf{CG-MIP} & \textbf{CG-DP} & \textbf{BDCG-DP} & \shortstack{\textbf{CG-DP}\\\textbf{speedup}} & \shortstack{\textbf{BDCG-DP}\\\textbf{speedup}} \\
    \midrule
    $T_1$ (20) & 30.52 & 2.19 & \textbf{1.87} & 14.0x & 16.3x \\
    $T_2$ (18) & 50.97 & 6.55 & \textbf{6.04} & 7.8x & 8.4x \\
    $T_4$ (14) & 341.35 & 328.20 & \textbf{42.34} & 1.0x & 8.1x \\
    $T_8$ (6) & 480 (Cap) & 480 (Cap) & \textbf{480 (Cap)} & \add{--} &\add{--} \\
    \bottomrule
  \end{tabular*}

  \vspace{0.9em}

  \textbf{Final objective and root-LP gap}

  \vspace{0.3em}

  \begin{tabular*}{\textwidth}{@{\extracolsep{\fill}}lccc c ccc c}
    \toprule
    & \multicolumn{3}{c}{\textbf{Median objective ($\times 10^9$)}} & \multicolumn{1}{c}{\textbf{Improvement}} & \multicolumn{3}{c}{\textbf{Median gap$^{\mathrm{LP}}$ (\%)}} & \multicolumn{1}{c}{\textbf{Improvement}} \\
    \cmidrule(lr){2-4}\cmidrule(lr){5-5}\cmidrule(lr){6-8}\cmidrule(lr){9-9}
    \textbf{$T$ (n)} & \textbf{CG-MIP} & \textbf{CG-DP} & \textbf{BDCG-DP} & \shortstack{\textbf{Obj. reduction}\\\textbf{vs CG-MIP}} & \textbf{CG-MIP} & \textbf{CG-DP} & \textbf{BDCG-DP} & \shortstack{\textbf{Gap ratio}\\\textbf{CG-MIP / BDCG-DP}} \\
    \midrule
    $T_1$ (20) & 0.36 & 0.36 & \textbf{0.36} & 0.5\% & 3.14 & 3.04 & \textbf{2.30} & 1.36x \\
    $T_2$ (18) & 0.60 & 0.59 & \textbf{0.59} & 0.4\% & 3.61 & 3.24 & \textbf{3.11} & 1.16x \\
    $T_4$ (14) & 1.04 & 1.04 & \textbf{1.04} & 0.5\% & 3.71 & 3.40 & \textbf{3.27} & 1.13x \\
    $T_8$ (6) & 2.72 & 2.71 & \textbf{2.05} & 24.7\% & 28.11 & 28.43 & \textbf{4.12} & 6.82x \\
    \bottomrule
  \end{tabular*}

  \vspace{0.3em}
  {\par\raggedright\scriptsize \textit{Notes:} $n$ $=$ number of instances per horizon; Cap $=$ the 480-minute end-to-end budget is reached; the speedup columns report ratios of median runtimes relative to CG-MIP. At $T_8$ the median runtime of every method equals the budget, so runtime ratios are omitted. $\mathrm{Gap}^{\mathrm{LP}}$ is defined in Section~\ref{subsec:case_setup}.\par}
  }
\end{table*}

\subsubsection{Long Planning Horizon Results}

As the planning horizon expands to four and eight weeks, the structural advantages of the BDCG architecture become the primary driver of performance. At $T_4$, the proposed BDCG framework continues to deliver a factor-of-eight speedup. Table~\ref{tab:case_summary_metrics} shows that BDCG-DP achieves a median end-to-end runtime of 42.3 minutes, which is approximately a factor of eight faster than the 341.4 minutes required by CG-MIP and the 328.2 minutes required by CG-DP. At $T_8$, the CG baselines exhaust the common 8-hour budget on all six instances. BDCG-DP exhausts the budget on four of the six instances, so its median total runtime also equals the cap, while its fastest run finishes after 307.7 minutes (Table~\ref{tab:ec_four_method_endtoend}). In this time-constrained setting, the evaluation shifts from runtime to final solution quality. BDCG-DP lowers the median final objective from 2.71 billion under CG-DP to 2.05 billion, a 24.4\% reduction in the median objective value. Simultaneously, the median gap to the root-LP bound decreases from approximately 28\% under the CG methods down to just 4.12\%.

\begin{figure*}[t]
 \centering
  \includegraphics[width=0.86\textwidth]{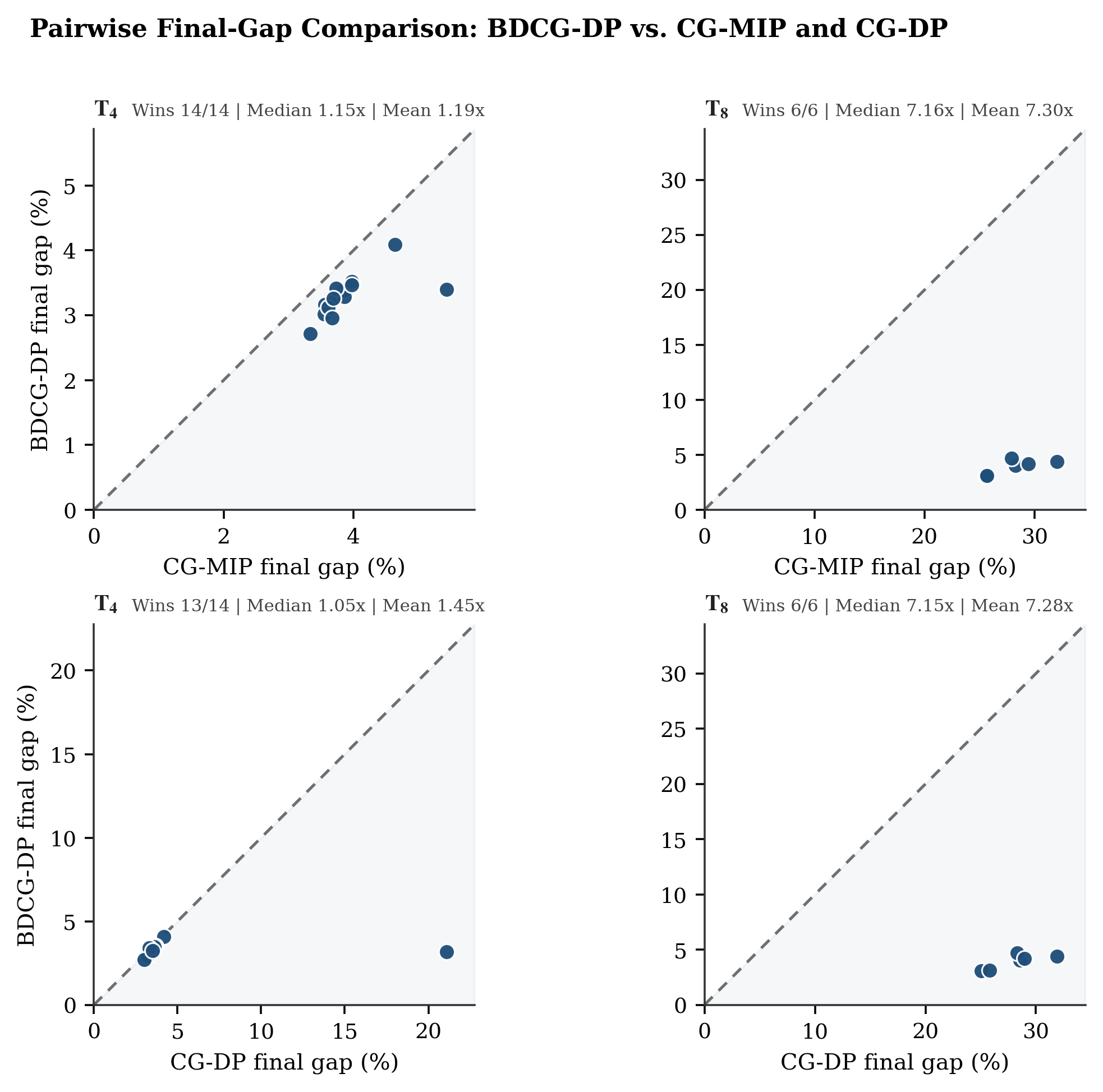}
  \caption{Instance-level comparison of final gap to the common root-LP bound on the $T_4$ and $T_8$ instances. The
  top row compares BDCG-DP against CG-MIP, and the bottom row compares BDCG-DP against CG-DP. Points below the
  45-degree line favor BDCG-DP; panel headers report win counts and summary gap ratios.}
  \label{fig:case_pairwise_dominance}
\end{figure*}

Figure~\ref{fig:case_pairwise_dominance} provides an instance-by-instance comparison. At $T_4$, BDCG-DP attains a lower final gap on 13 of the 14 instances against both the CG-MIP and CG-DP baselines. As the problem scales to $T_8$, the framework attains a lower final root-LP gap on every instance against both methods. This consistent performance confirms that decoupling the supply-side combinatorics from the downstream inventory problem is essential for scaling exact integrated planning to industrial eight-week horizons.

\subsection{Analyzing the Added Advantage of Different Decomposition Components}

While the computational results establish that BDCG-DP attains the best median runtime on the one- to four-week horizons and the best median root-LP gap at every horizon, one may wonder why decoupling the supply-side combinatorics from the downstream inventory evaluation improves performance as the problem scales. To answer this, it is necessary to examine how the computational burden shifts between the continuous relaxation and the integer refinement stages, what that shift buys in terms of feasible incumbents, and which cost components account for the final objective improvement.

\textbf{Shifting the computational burden.}
Figure~\ref{fig:case_phase_split} illustrates how BDCG-DP redistributes the computational workload. As the planning horizon expands, the integrated CG baselines spend an increasing majority of their time in Phase~2 integer refinement. BDCG-DP mitigates this by dedicating more time to Phase~1, where it simultaneously prices columns and generates Benders cuts. This additional effort in the continuous relaxation reduces the downstream integer-refinement time. At $T_4$, the median Phase~2 solve time for BDCG-DP is 30.5 minutes, compared to over 260 minutes for the integrated baselines. Even at $T_8$, where all methods reach the time limit, BDCG-DP enters Phase~2 after completing more first-stage work (median Phase-1 time of 133.59 versus 46.17 minutes for CG-DP; Table~\ref{tab:ec_four_method_phase1}). Additional Phase~1 diagnostics are reported in the Electronic Companion.

\begin{figure*}[t]


  \includegraphics[width=0.86\textwidth]{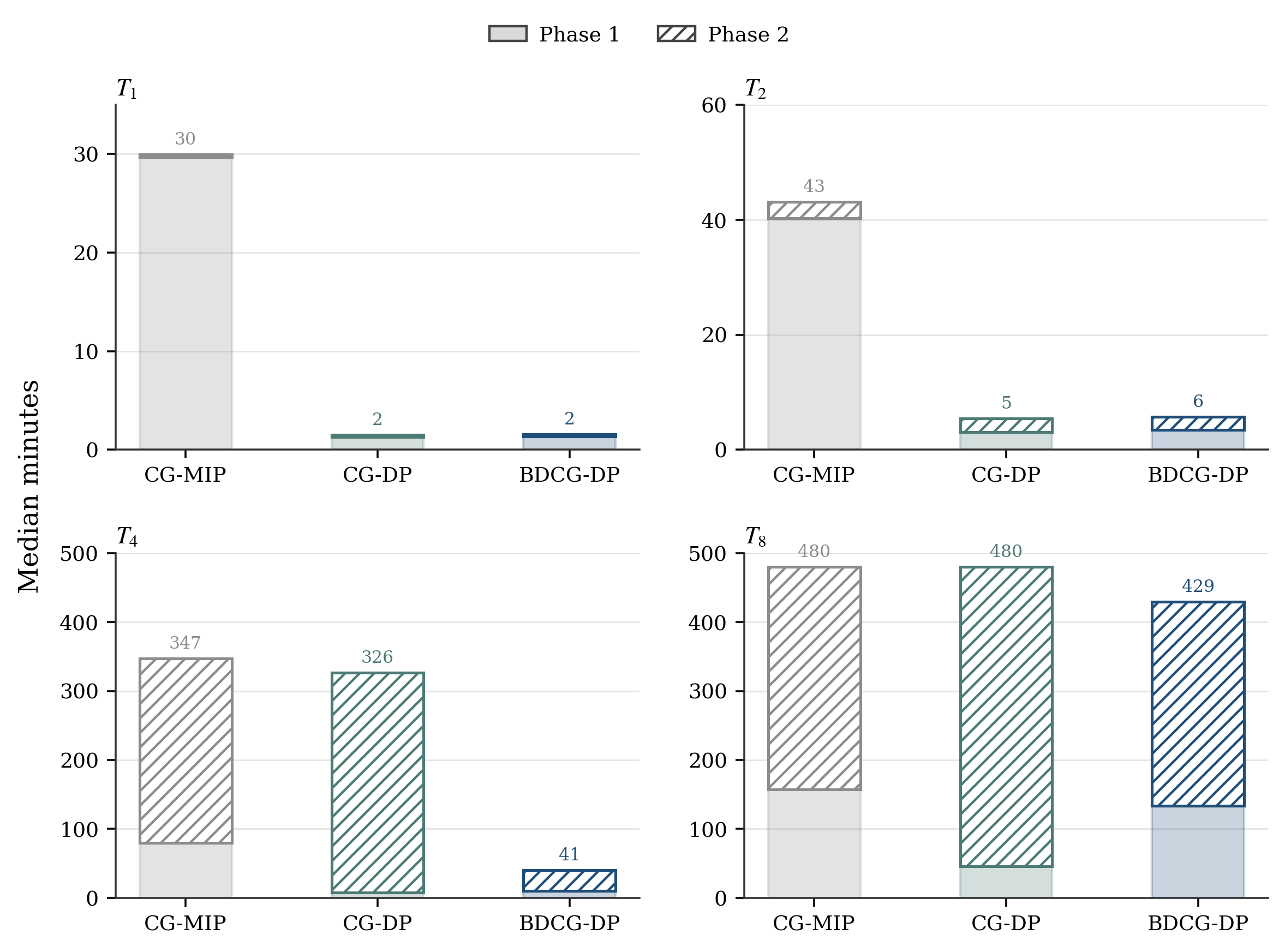}
 \captionsetup{type=figure,skip=4pt}
  \caption{Median phase-split runtime by horizon for CG-MIP, CG-DP, and BDCG-DP. Shaded segments report Phase~1 time,
  hatched segments report Phase~2 time, and labels above the bars report the sum of the displayed phase medians.}
 \label{fig:case_phase_split}
\end{figure*}

\textbf{Earlier and improved incumbents.}
Figure~\ref{fig:case_phase2_progress} shows the practical effect of this Phase~1 effort. On the six $T_8$ instances, BDCG-DP reaches its first integer-feasible solution between 122 and 343 minutes from the algorithm start. Each of these \emph{initial} BDCG-DP incumbents achieves a lower objective value than the \emph{final} end-of-budget incumbent produced by CG-DP. This pattern suggests that the additional Phase~1 work provides Phase~2 with a tighter starting formulation, allowing feasible solutions to emerge earlier and with lower costs.

\begin{figure*}[t]
  \centering
  \includegraphics[width=0.92\textwidth]{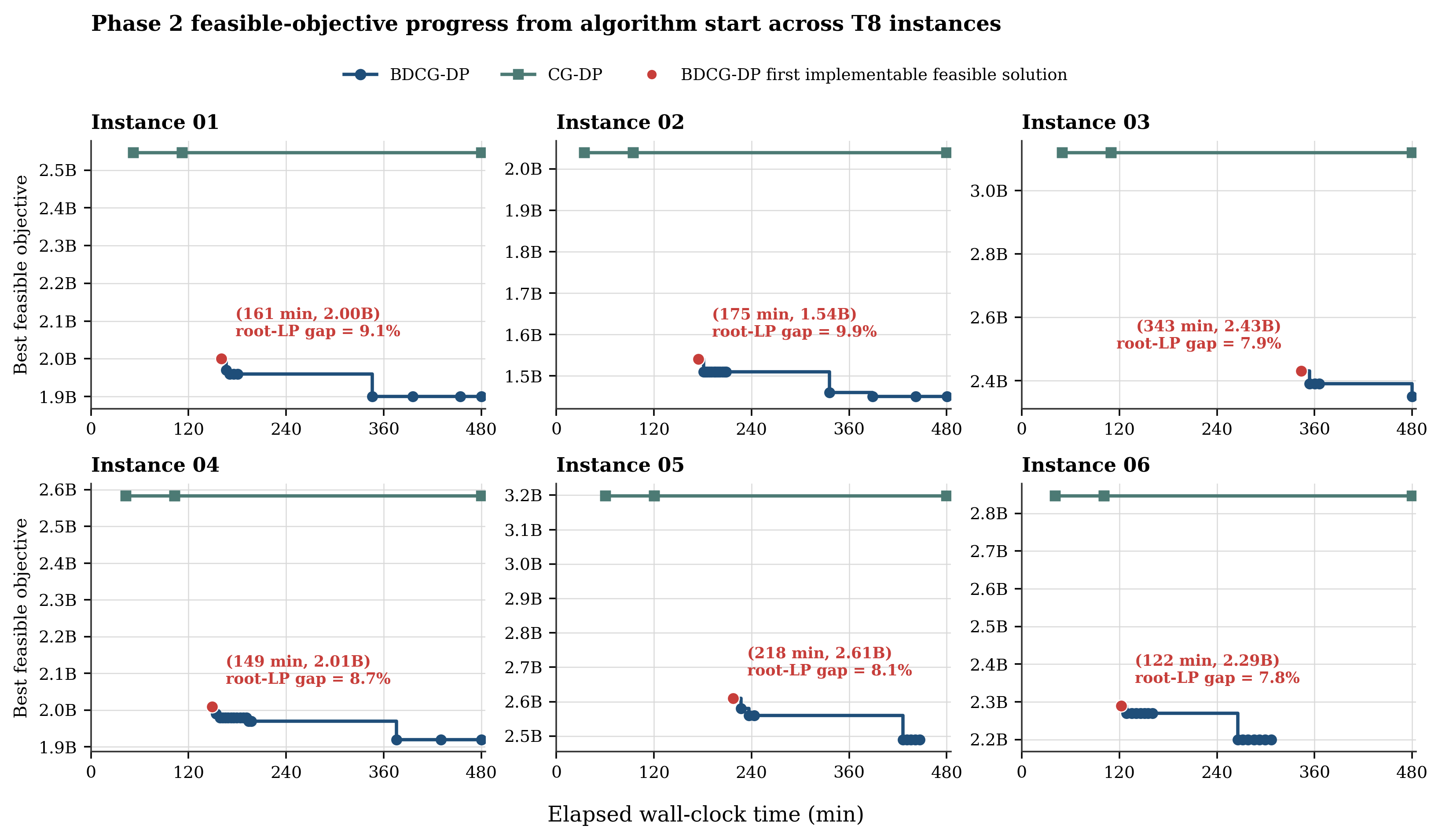}
  \caption{End-to-end feasible-objective progress on the $T_8$ instances. Each curve shows the best implementable objective found so far; red markers denote the first feasible BDCG-DP solution, and the adjacent label reports the corresponding root-LP gap.}
  \label{fig:case_phase2_progress}
\end{figure*}

\textbf{MTS penalties drive the cost reduction.}
Figure~\ref{fig:case_cost_components} breaks down the final objective into its core cost components. At $T_8$, the median production and transportation costs remain stable when moving from CG-DP to BDCG-DP. The improvement occurs instead within the downstream fulfillment portion of the objective. BDCG-DP lowers the median MTS penalty cost from 1.68 billion to 1.03 billion, which accounts for the reduction in the total objective. This aligns with the structure of the decomposition, as the demand-side Benders cuts directly target the downstream MTS value.

\begin{figure*}[t]
  \centering
  \includegraphics[width=0.94\textwidth]{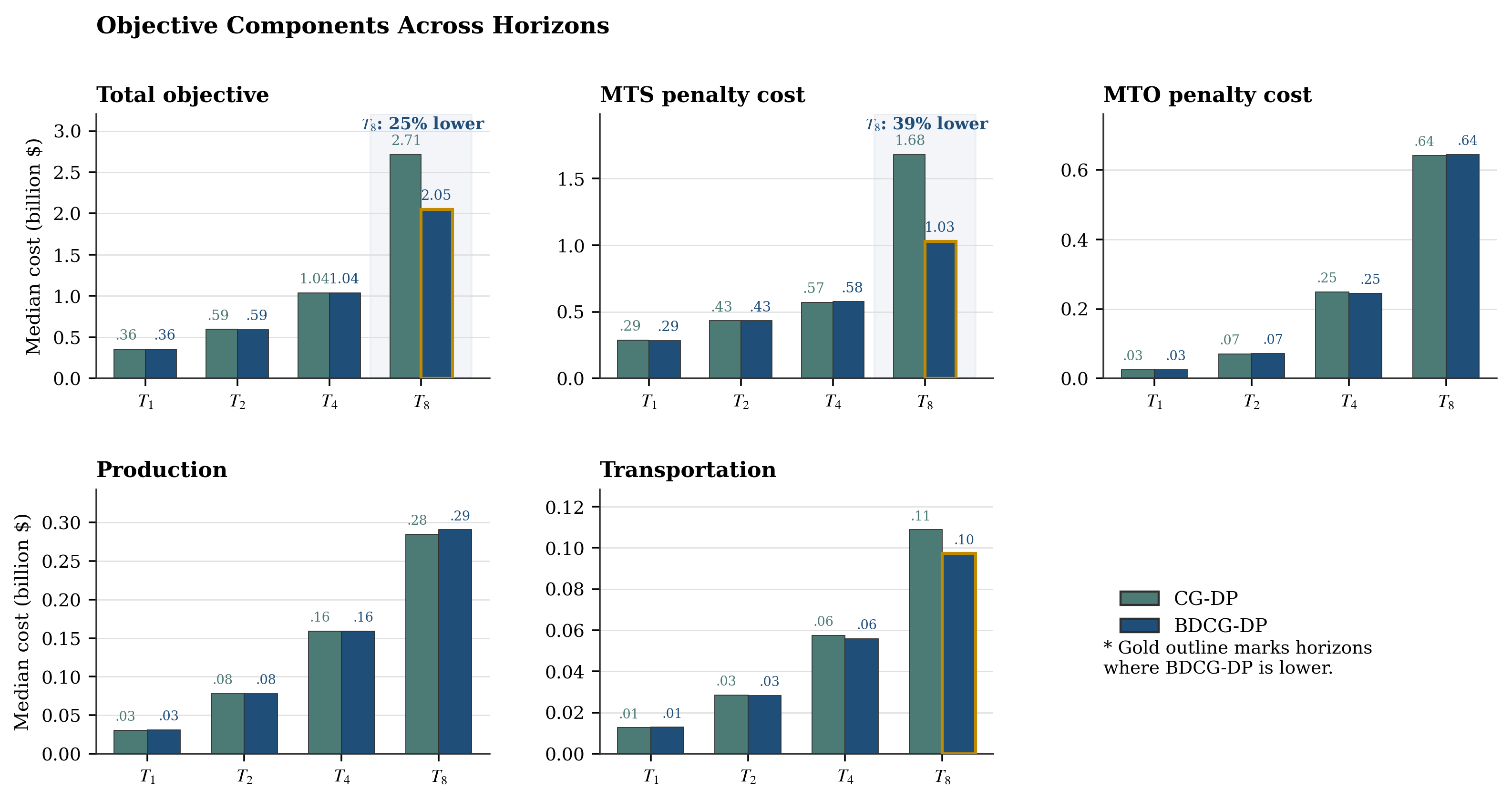}
  \caption{Median objective decomposition for CG-DP and BDCG-DP by horizon.}
  \label{fig:case_cost_components}
\end{figure*}

\subsection{Managerial Insights}
For decision-makers, the practical value of a planning tool depends on both the final solution quality and how quickly a usable plan becomes available. Figure~\ref{fig:case_phase2_progress} shows that the first feasible BDCG-DP plan arrives within 2.3 to 6.0 hours before the common time budget expires and already improves upon the final incumbent returned by CG-DP. The method therefore offers two practical benefits. It provides a lower-cost final schedule, and it delivers an integer-feasible operational plan hours before the time budget is exhausted. In a rolling-horizon planning environment, where decisions are re-optimized periodically as new information becomes available, early access to a lower-cost plan can be as important as the final end-of-budget improvement itself.


\section{Conclusion}
\label{sec:conclusion}

This paper introduces an exact integrated model for the end-to-end paper manufacturing and distribution supply chain, jointly optimizing production, cutting stock, vehicle-feasible loading, and multi-period fulfillment. The central methodological insight is that upstream production and logistics decisions interact with downstream fulfillment only through the exact quantities of each product delivered to each customer in each period. This structural separation yields the BDCG-DP framework. The approach uses column generation with exact dynamic-programming pricing to manage the exponentially large supply-side pattern families, while simultaneously using Benders decomposition to tighten the root LP relaxation with respect to downstream fulfillment costs.

The computational evidence on 58 proprietary industrial instances demonstrates the scalability of this architecture. On short planning horizons, the framework accelerates end-to-end solution times by factors of 16.3 (one-week) and 8.4 (two-week) relative to the CG-MIP baseline, an acceleration driven by the exact dynamic-programming pricing. On the eight-week horizons, where the benchmarked methods face a common eight-hour budget, the proposed BDCG-DP achieves a lower final root-LP gap on all six instances against both baselines. By entering integer refinement with a compact master and a pre-generated pool of demand-side cuts, it reduces the median gap from 28.43\% to 4.12\% and lowers final objective values by 24.4\% relative to CG-DP. For operational decision-makers, the framework also delivers integer-feasible plans with root-LP gaps below 10\% in 2.3 to 6.0 hours before the computational budget expires.


%
%
%







\medskip\noindent\textbf{Data Ethics \& Reproducibility Note.} The 58 benchmark instances were provided by an industrial partner under a confidentiality agreement and cannot be shared. The Electronic Companion documents the initial-pattern construction, the Phase~2 warm-start procedure, and the experimental protocol used by all four benchmark methods.

\bibliographystyle{informs2014}
\bibliography{sample}


\ECSwitch
\ECHead{Electronic Companion: Additional Computational Evidence and Proofs}
\renewcommand{\theequation}{\thesection.\arabic{equation}}

\section{Additional computational evidence}
\label{sec:ec_computational}

\subsection{Initial trimming-pattern set}
\label{sec:ec_initial_patterns}
Column generation requires an initial restricted master that contains enough columns to satisfy the mandatory shipment constraints \eqref{eq:RMP_min_ship}; otherwise, the Phase-1 restricted LP is infeasible. For each benchmark instance, a common initial trimming-pattern pool is therefore constructed before any of the compared algorithms begins. This preprocessing uses only instance data; it does not use dual prices, Benders cuts, or information from any benchmark method. All notation is as defined in Section~\ref{sec:mathematical_formulation}; the only auxiliary symbol is the index set $\mathcal{A}_b$ introduced below, which is used in this appendix alone.

The construction has three steps. First, a mandatory-coverage pool is built. For each run $b$, let
\[
\mathcal{A}_b := \bigl\{(p,c,v) : x^{\min}_{b,p,c,v} > 0\bigr\}
\]
denote the product--customer--vehicle combinations with positive minimum-shipment requirements from that run. For every run with $\mathcal{A}_b \neq \emptyset$, the following width-feasibility covering problem assigns the required units to the candidate jumbo-roll slots $o \in O_b$ of Section~\ref{sec:item_based_model}, reusing the item-based cutting variables $x_{o,p,c,v}$ and roll indicators $u_{b,o}$:
\begin{equation}
\label{eq:ec_initial_cover}
\begin{aligned}
  \min \quad & \sum_{o \in O_b} u_{b,o} \\
  \text{s.t.} \quad
  & \sum_{o \in O_b} x_{o,p,c,v} \ge x^{\min}_{b,p,c,v}
    && \forall\, (p,c,v) \in \mathcal{A}_b, \\
  & \sum_{(p,c,v) \in \mathcal{A}_b} w_p\, x_{o,p,c,v} \le W_b\, u_{b,o}
    && \forall\, o \in O_b, \\
  & x_{o,p,c,v} \in \mathbb{Z}_+, \quad u_{b,o} \in \{0,1\}.
\end{aligned}
\end{equation}
Each used slot $o$ (i.e., $u_{b,o} = 1$) defines one width-feasible trimming pattern $k$ with coefficients $a_{p,c,v,k} = x_{o,p,c,v}$ for $(p,c,v) \in \mathcal{A}_b$ and $a_{p,c,v,k} = 0$ otherwise, and the minimized slot count $\sum_{o \in O_b} u_{b,o}$ provides a lower bound on the number of jumbo rolls required in run $b$. Any feasible solution of the item-based model induces, by restriction of its cutting decisions to the tuples in $\mathcal{A}_b$, a feasible solution of \eqref{eq:ec_initial_cover}, so the covering problem is feasible on every benchmark instance. This step ensures that the initial pool contains patterns covering all positive minimum-shipment rows \eqref{eq:RMP_min_ship}.

Second, a compact aggregate planning model is solved, with the run counts from the mandatory-coverage step enforced as lower bounds. This model uses aggregate run-count and shipment variables rather than enumerated trimming patterns, and includes the production, transportation, and demand and inventory logic of Section~\ref{sec:mathematical_formulation}. Its solution serves only as a guide: it provides target run counts and positive shipment quantities for the diversification step. Its objective value is not used in the computational comparison.

Third, the remaining pattern budget is filled with randomized width-feasible patterns. Sampling effort is allocated across runs according to the aggregate run counts and active shipment lanes from the second step. Each sampled pattern selects a random subset of product--lane combinations and assigns integer quantities within the deckle-width limit; empty or width-infeasible candidates are rejected, and duplicates are removed.

The final initial trimming-pattern pool is the deduplicated union of the mandatory-coverage patterns and the randomized diversification patterns, capped at 30{,}000 distinct patterns per instance. The same pool is supplied to CG-MIP, CG-DP, BDCG-MIP, and BDCG-DP, and its construction time is identical across methods. Differences in runtime and final gaps are therefore not attributable to method-specific initial trimming columns.
\subsection{Initial loading configurations}
\label{sec:ec_initial_load_cfgs}

The restricted master needs initial loading configurations for the same reason it needs initial trimming patterns. The coupling constraints \eqref{eq:RMP_link} require every roll cut for a lane $(b,c,v)$ to be covered by vehicle capacity reserved on that lane, and vehicle capacity can only be reserved by selecting loading-configuration columns from the pool $\bar{\Gamma}_{b,c,v}$. Because the minimum-shipment rows \eqref{eq:RMP_min_ship} force positive cut quantities, an initial pool without loading columns on the affected lanes would make the very first restricted LP infeasible.

The initial loading pool is constructed by a simple rule at master-assembly time, not enumerated offline. For every lane $(b,c,v)$ and every product $p$ that some initial trimming pattern $k$ ships on that lane (i.e., $a_{p,c,v,k}>0$), the pool receives one \emph{single-product} configuration $\gamma \in \Gamma_{b,c,v}$ that loads
\begin{equation}
\label{eq:ec_seed_load_cfg}
  q^{\max}_{b,p,v}
  \;:=\;
  \min\left\{
    \left\lfloor G_v / g_{b,p} \right\rfloor,\;
    \left\lfloor Q_v / q_p \right\rfloor
  \right\}
\end{equation}
rolls of product $p$ and nothing else, where $G_v$ and $Q_v$ are the per-vehicle weight and position capacities and $g_{b,p}$ and $q_p$ are the unit weight and position requirements of Section~\ref{sec:mathematical_formulation}. By construction, $\gamma$ is the largest load of product $p$ alone that a single vehicle on lane $(b,c,v)$ can carry, so it is a feasible element of $\Gamma_{b,c,v}$.

This seeding is sufficient for feasibility. In the Phase-1 LP relaxation the configuration-usage variables $y^{\text{load}}$ are continuous, so any cut quantity $q$ of product $p$ on lane $(b,c,v)$ can be covered by selecting the seeded configuration at level $q / q^{\max}_{b,p,v}$; in Phase~2, integer multiples of the seeded configurations play the same role. We then generate the initial patterns that mix products on one vehicle during Phase~1 by the loading pricing problem \eqref{eq:bdcg_load_pricing}. The rule uses only instance data, is identical across CG-MIP, CG-DP, BDCG-MIP, and BDCG-DP, and its (negligible) construction time is incurred inside each method's measured runtime. In contrast, the shared trimming-pattern pool of Section~\ref{sec:ec_initial_patterns} is constructed once per instance in preprocessing, outside the 8-hour budget, and reused by all methods.

\subsection{Interpreting the phase-split comparisons}
\label{sec:ec_phase_split_interpretation}

When comparing phase splits across the different architectures, the raw Phase~1 times are not directly equivalent. In the integrated CG-MIP and CG-DP methods, Phase~1 is a pure column-generation stage over the integrated master. In the decomposed BDCG-MIP and BDCG-DP methods, Phase~1 simultaneously generates supply-side columns and demand-side Benders cuts. A BDCG method naturally spends more time in Phase~1 because it performs more work to close the downstream approximation before integer refinement begins. The relevant metric is therefore not which method has the shortest Phase~1, but how that Phase~1 investment improves the final feasible solution quality achieved under the common end-to-end budget.

\subsection{Full four-method timing comparison}
\label{sec:ec_four_method}

The four-method ablation tables provide a detailed view of how the pricing solver and the decomposition architecture interact.

Tables~\ref{tab:ec_four_method_endtoend}--\ref{tab:ec_four_method_phase2} report the median, mean, and cross-instance range for all four benchmark configurations. The end-to-end table reports the final root-LP gap, while the Phase~1 and Phase~2 tables isolate the wall-clock time before and after the integer-refinement handoff.

\begin{table*}[t]
    \centering
    \caption{End-to-end wall-clock time (minutes) and final root-LP gap (\%) for CG-MIP, CG-DP, BDCG-MIP, and BDCG-DP. `Cap' denotes total time at or above the 480-minute budget.}
    \label{tab:ec_four_method_endtoend}
    {\scriptsize
    \setlength{\tabcolsep}{2.8pt}
    \renewcommand{\arraystretch}{1.06}
    \resizebox{\textwidth}{!}{%
    \begin{tabular}{l rrr rrr rrr rrr}
      \toprule
      & \multicolumn{3}{c}{\textbf{CG-MIP}} & \multicolumn{3}{c}{\textbf{CG-DP}} & \multicolumn{3}{c}{\textbf{BDCG-MIP}} & \multicolumn{3}{c}{\textbf{BDCG-DP}} \\
      \cmidrule(lr){2-4}\cmidrule(lr){5-7}\cmidrule(lr){8-10}\cmidrule(lr){11-13}
      \textbf{$T$ (n)} & Median & Mean & {[Min, Max]} & Median & Mean & {[Min, Max]} & Median & Mean & {[Min, Max]} & Median & Mean & {[Min, Max]} \\
      \midrule
      \multicolumn{13}{l}{\emph{Total Runtime (min)}} \\
      \midrule
      $T_1$ (20) & 30.52 & 30.55 & {[4.65, 81.46]} & 2.19 & 2.38 & {[1.24, 4.35]} & 21.23 & 32.29 & {[4.29, 87.57]} & \textbf{1.87} & \textbf{2.08} & \textbf{{[1.18, 3.32]}} \\
      $T_2$ (18) & 50.97 & 51.32 & {[10.43, 131.62]} & 6.55 & 25.67 & {[4.49, 138.86]} & 47.55 & 51.40 & {[16.23, 88.61]} & \textbf{6.04} & \textbf{6.63} & \textbf{{[3.58, 16.95]}} \\
      $T_4$ (14) & 341.35 & 344.83 & {[203.35, Cap]} & 328.20 & 331.86 & {[207.34, Cap]} & 140.68 & 142.11 & {[85.48, 231.23]} & \textbf{42.34} & \textbf{44.13} & \textbf{{[18.36, 85.43]}} \\
      $T_8$ (6) & Cap & Cap & {Cap} & Cap & Cap & {Cap} & Cap & 472.57 & {[430.03, Cap]} & \textbf{Cap} & \textbf{446.67} & \textbf{{[307.65, Cap]}} \\
      \midrule
      \multicolumn{13}{l}{\emph{Final Gap$^{\mathrm{LP}}$ (\%)}} \\
      \midrule
      \textbf{$T$ (n)} & Median & Mean & {[Min, Max]} & Median & Mean & {[Min, Max]} & Median & Mean & {[Min, Max]} & Median & Mean & {[Min, Max]} \\
      \midrule
      $T_1$ (20) & 3.14 & 3.18 & {[2.18, 4.11]} & 3.04 & 2.97 & {[2.31, 3.73]} & 2.85 & 2.83 & {[1.45, 4.15]} & \textbf{2.30} & \textbf{2.37} & \textbf{{[1.13, 3.38]}} \\
      $T_2$ (18) & 3.61 & 3.57 & {[3.17, 4.11]} & 3.24 & 3.26 & {[2.85, 3.87]} & 3.49 & 3.53 & {[2.82, 4.18]} & \textbf{3.11} & \textbf{3.09} & \textbf{{[2.29, 3.91]}} \\
      $T_4$ (14) & 3.71 & 3.90 & {[3.33, 5.43]} & 3.40 & 4.70 & {[3.00, 21.09]} & 3.72 & 3.74 & {[3.38, 4.63]} & \textbf{3.27} & \textbf{3.28} & \textbf{{[2.72, 4.09]}} \\
      $T_8$ (6) & 28.11 & 28.16 & {[25.55, 32.06]} & 28.43 & 28.09 & {[25.03, 31.92]} & 4.40 & 4.70 & {[3.49, 7.50]} & \textbf{4.12} & \textbf{3.92} & \textbf{{[3.07, 4.68]}} \\
      \bottomrule
    \end{tabular}%
    }
    }
  \end{table*}

\begin{table*}[t]
    \centering
    \caption{Phase~1 wall-clock time (minutes) for CG-MIP, CG-DP, BDCG-MIP, and BDCG-DP.}
    \label{tab:ec_four_method_phase1}
    {\scriptsize
    \setlength{\tabcolsep}{2.8pt}
    \renewcommand{\arraystretch}{1.06}
    \resizebox{\textwidth}{!}{%
    \begin{tabular}{l rrr rrr rrr rrr}
      \toprule
      & \multicolumn{3}{c}{\textbf{CG-MIP}} & \multicolumn{3}{c}{\textbf{CG-DP}} & \multicolumn{3}{c}{\textbf{BDCG-MIP}} & \multicolumn{3}{c}{\textbf{BDCG-DP}} \\
      \cmidrule(lr){2-4}\cmidrule(lr){5-7}\cmidrule(lr){8-10}\cmidrule(lr){11-13}
      \textbf{$T$ (n)} & Median & Mean & {[Min, Max]} & Median & Mean & {[Min, Max]} & Median & Mean & {[Min, Max]} & Median & Mean & {[Min, Max]} \\
      \midrule
      \multicolumn{13}{l}{\emph{Phase~1 Time (min)}} \\
      \midrule
      $T_1$ (20) & 29.70 & 29.72 & {[3.39, 80.22]} & \textbf{1.30} & \textbf{1.57} & \textbf{{[0.74, 3.75]}} & 20.76 & 31.84 & {[3.98, 87.24]} & 1.36 & 1.63 & {[0.82, 2.85]} \\
      $T_2$ (18) & 40.27 & 41.82 & {[7.89, 76.72]} & \textbf{3.05} & \textbf{3.14} & \textbf{{[1.77, 4.96]}} & 45.78 & 48.24 & {[15.32, 83.54]} & 3.43 & 3.46 & {[2.10, 5.26]} \\
      $T_4$ (14) & 79.83 & 74.34 & {[34.30, 101.58]} & \textbf{8.15} & \textbf{8.07} & \textbf{{[6.08, 10.27]}} & 81.94 & 83.40 & {[46.42, 110.69]} & 10.02 & 10.13 & {[7.04, 13.59]} \\
      $T_8$ (6) & 157.45 & 159.00 & {[120.94, 188.73]} & \textbf{46.17} & \textbf{46.52} & \textbf{{[34.09, 60.19]}} & 177.23 & 186.24 & {[161.82, 225.86]} & 133.59 & 148.87 & {[95.21, 225.94]} \\
      \bottomrule
    \end{tabular}%
    }
    }
  \end{table*}

\begin{table*}[t]
    \centering
    \caption{Phase~2 wall-clock time (minutes) for CG-MIP, CG-DP, BDCG-MIP, and BDCG-DP.}
    \label{tab:ec_four_method_phase2}
    {\scriptsize
    \setlength{\tabcolsep}{2.8pt}
    \renewcommand{\arraystretch}{1.06}
    \resizebox{\textwidth}{!}{%
    \begin{tabular}{l rrr rrr rrr rrr}
      \toprule
      & \multicolumn{3}{c}{\textbf{CG-MIP}} & \multicolumn{3}{c}{\textbf{CG-DP}} & \multicolumn{3}{c}{\textbf{BDCG-MIP}} & \multicolumn{3}{c}{\textbf{BDCG-DP}} \\
      \cmidrule(lr){2-4}\cmidrule(lr){5-7}\cmidrule(lr){8-10}\cmidrule(lr){11-13}
      \textbf{$T$ (n)} & Median & Mean & {[Min, Max]} & Median & Mean & {[Min, Max]} & Median & Mean & {[Min, Max]} & Median & Mean & {[Min, Max]} \\
      \midrule
      \multicolumn{13}{l}{\emph{Phase~2 Time (min)}} \\
      \midrule
      $T_1$ (20) & 0.26 & 0.42 & {[0.11, 1.15]} & 0.22 & 0.40 & {[0.12, 1.17]} & 0.21 & 0.29 & {[0.12, 0.99]} & \textbf{0.20} & \textbf{0.28} & {[0.10, 1.03]} \\
      $T_2$ (18) & 2.81 & 8.67 & {[0.48, 93.40]} & 2.33 & 21.69 & {[0.52, 136.32]} & 2.66 & 2.86 & {[0.36, 6.10]} & \textbf{2.27} & \textbf{2.86} & {[0.39, 12.20]} \\
      $T_4$ (14) & 267.53 & 268.64 & {[157.40, 380.46]} & 318.21 & 321.95 & {[198.65, 471.84]} & 51.37 & 58.17 & {[14.09, 147.35]} & \textbf{30.51} & \textbf{33.43} & {[8.12, 75.77]} \\
      $T_8$ (6) & 322.54 & 320.98 & {[291.24, 359.04]} & 433.82 & 433.47 & {[419.79, 445.89]} & \textbf{279.94} & \textbf{285.35} & {[254.25, 318.27]} & 295.51 & 296.75 & {[195.61, 384.93]} \\
      \bottomrule
    \end{tabular}%
    }
    }
  \end{table*}

Figure~\ref{fig:ec_four_method_phase2_progress} complements Table~\ref{tab:ec_four_method_phase2} by plotting the feasible-objective progress from algorithm start on the six $T_8$ instances. This view is useful because the inherited-time protocol couples the raw Phase~2 minutes to the amount of time already spent in Phase~1. A method that enters integer refinement later begins its progress curve later on the wall-clock axis and inherits less remaining budget. The orange and red callouts mark the first feasible BDCG-MIP and BDCG-DP solutions on each instance and report the corresponding time, objective, and root-LP gap. These annotations make it easier to distinguish whether differences in raw Phase~2 time reflect the second-stage refinement itself or the earlier Phase~1 handoff.

The ablation tables confirm the distinct roles of the pricing solver and the decomposition architecture. Holding the generic MIP pricing fixed, moving from CG-MIP to BDCG-MIP yields the largest improvements on the longest horizons. At $T_8$, the decomposition architecture alone reduces the median root-LP gap from 28.11\% down to 4.40\%. Conversely, holding the BDCG architecture fixed, upgrading from MIP pricing to exact DP pricing yields its largest relative runtime gains on the short horizons. At $T_1$, BDCG-DP reduces the median end-to-end runtime from 21.23 minutes to 1.87 minutes compared to BDCG-MIP. The DP pricing upgrade also provides consistent benefits on longer horizons, reducing the median end-to-end runtime at $T_4$ from 140.68 to 42.34 minutes and slightly improving the median root-LP gap at $T_8$ from 4.40\% to 4.12\%. Together, these results validate the hybrid BDCG-DP design, which leverages exact DP pricing for column generation and Benders decomposition for long-horizon coordination.

\begin{figure*}[t]
  \centering
  \includegraphics[width=0.94\textwidth]{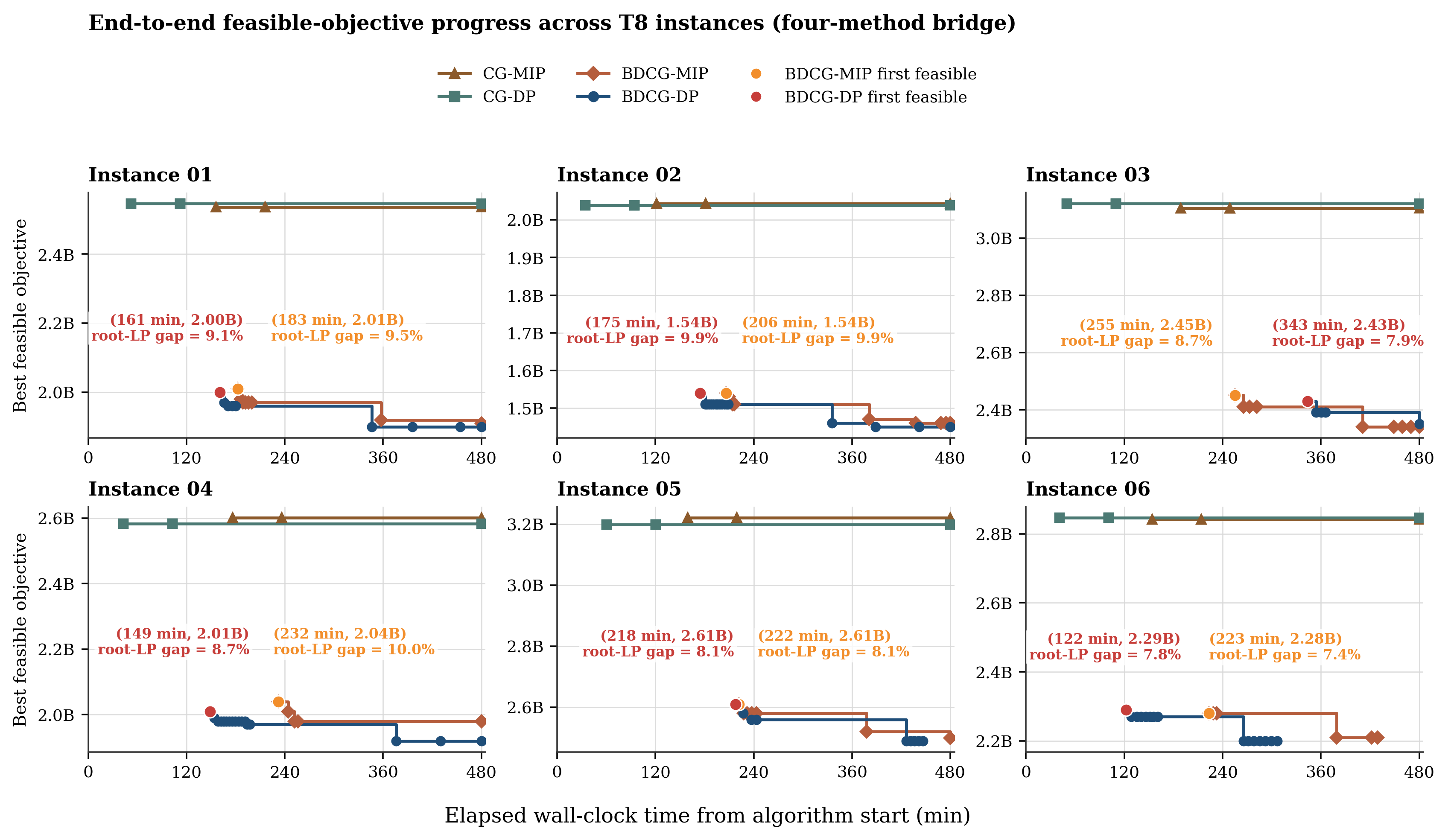}
  \caption{End-to-end feasible-objective progress on the six $T_8$ instances for CG-MIP, CG-DP, BDCG-MIP, and BDCG-DP. The orange and red callouts mark the first feasible BDCG-MIP and BDCG-DP solutions, respectively, and report the corresponding elapsed time, objective value, and root-LP gap.}
  \label{fig:ec_four_method_phase2_progress}
\end{figure*}

\subsection{Long-horizon Phase~1 timing and convergence diagnostics}
\label{sec:ec_long_horizon_diagnostics}

As the planning horizon expands, the distribution of computational effort within Phase~1 shifts. Table~\ref{tab:ec_phase1_breakdown} decomposes the median BDCG-DP Phase~1 time by component; the residual covers master reoptimization and related coordination work. At $T_4$, pricing is the largest component; at $T_8$, the residual dominates. This shift indicates that master reoptimization and related coordination work dominate the Phase~1 effort on the largest instances.

\begin{table}[!t]
  \centering
  \caption{Median BDCG-DP Phase~1 time decomposition (minutes)}
  \label{tab:ec_phase1_breakdown}
  {\small
  \begin{tabular}{lrrrrr}
    \toprule
    & \textbf{Pricing} & \textbf{Cut generation} & \textbf{Chain LPs} & \textbf{Master and other} & \textbf{Total} \\
    \midrule
    $T_4$ & 5.36 & 0.78 & 0.17 & 3.71 & 10.02 \\
    $T_8$ & 10.33 & 2.02 & 0.37 & 120.86 & 133.59 \\
    \bottomrule
  \end{tabular}}
\end{table}

Figure~\ref{fig:ec_phase1_diagnostics} complements that timing view with two additional Phase~1 diagnostics on the six $T_8$ instances. The top panels compare the CG-DP Phase~1 LP objective against the BDCG-DP Phase~1 lower and evaluated upper bounds. The bottom panels track the relative Phase~1 gap closure of BDCG-DP. Across all six instances, the BDCG-DP evaluated upper bound falls sharply early in Phase~1, and the BDCG-DP bound bracket then continues to close before the handoff to Phase~2. This pattern supports viewing Phase~1 not only as a pricing stage, but also as an important investment in improving the downstream approximation before integer refinement begins.

\begin{figure*}[t]
  \centering
  \includegraphics[width=0.92\textwidth]{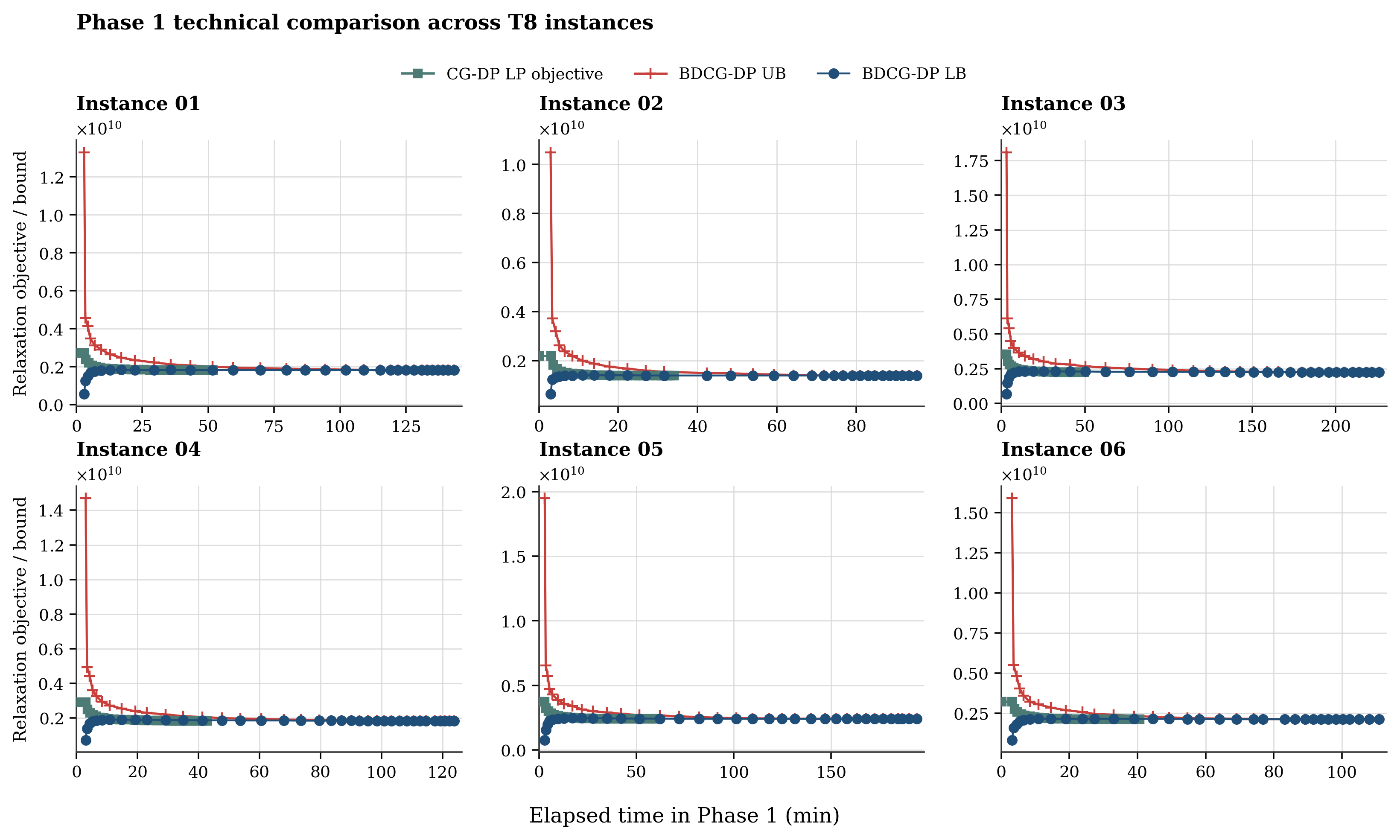}

  \vspace{0.8em}

  \includegraphics[width=0.92\textwidth]{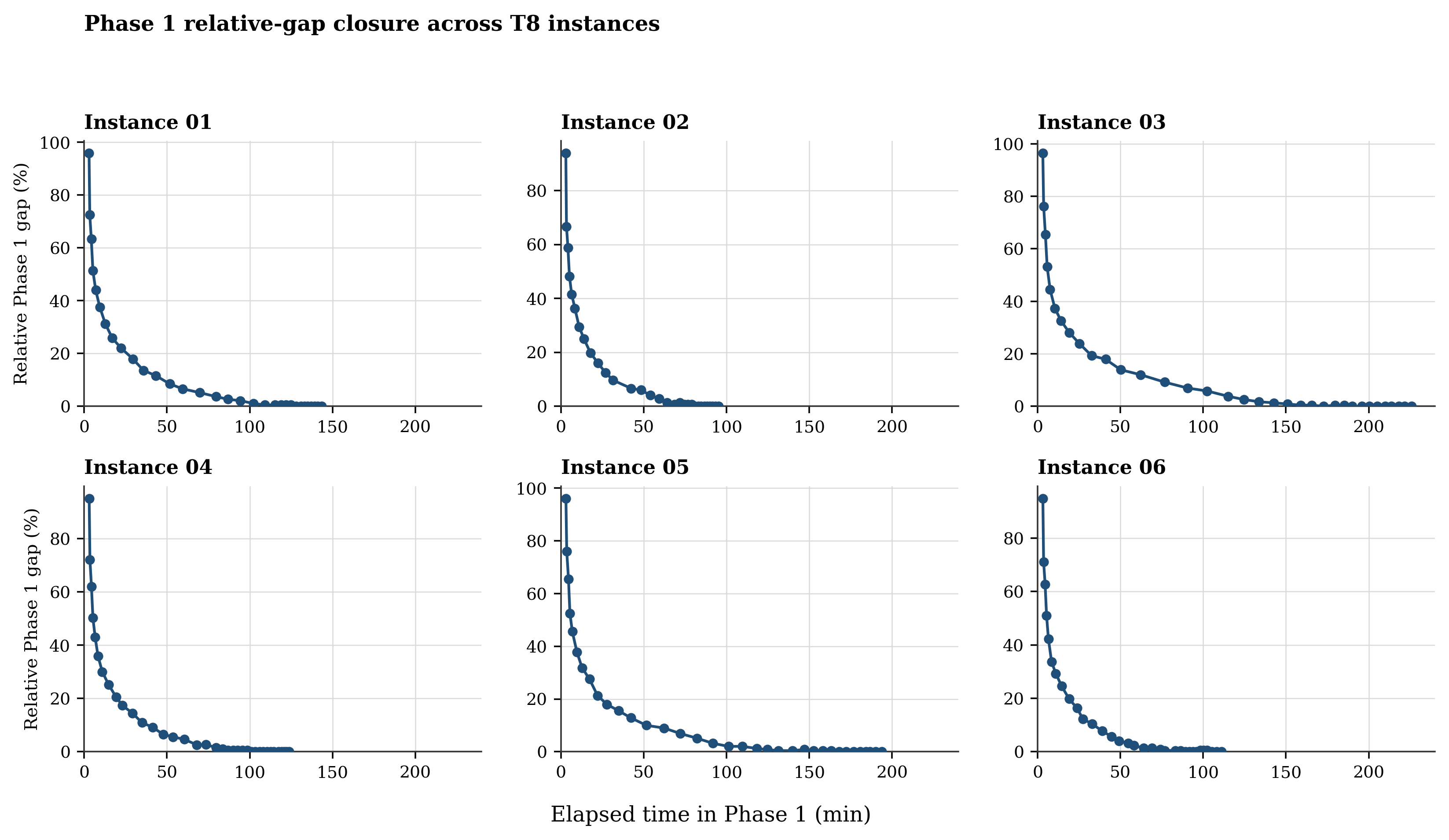}
  \caption{Phase~1 diagnostics on the six $T_8$ instances. Top: CG-DP and BDCG-DP Phase~1 trajectories, where the teal curve
  reports the CG-DP LP objective and the blue and red curves report the BDCG-DP lower and evaluated upper bounds. Bottom:
  relative Phase~1 gap closure under BDCG-DP, measured as the percentage gap between the evaluated upper and lower bounds over
  elapsed Phase~1 time.}
  \label{fig:ec_phase1_diagnostics}
\end{figure*}

\section{Phase~2 warm-start procedure}
\label{sec:ec_ph2_warmstart}

The Phase~2 initialization procedure used in the experiments is documented here. It is designed to improve the starting incumbent without altering the fixed-pool refinement model, the cut family, or the subsequent refinement logic. Because the same initialization is used throughout the BDCG family, it does not artificially inflate the performance of BDCG-DP relative to BDCG-MIP. The same staged initialization is also implemented, stage for stage, in the final integer solve of the CG-MIP and CG-DP baselines, so that all four methods enter their integer solve from comparably constructed incumbents. Comparisons between the CG and BDCG families therefore isolate the decomposition architecture at a matched initialization, rather than confounding it with an initialization advantage.

\textbf{Preparation.} At the start of Phase~2, all retained trimming-pattern variables $y^{\text{cut}}$ and loading-configuration variables $y^{\text{load}}$ are switched to integer type. The fixed-column master otherwise keeps the same production, transportation, and demand-side structure described in the main paper. The warm start changes only the initial incumbent and the amount of preliminary cut generation performed before the final integer-refinement loop begins.

\textbf{Stage 1: Mandatory-shipment baseline.} The first step solves a small auxiliary integer problem that covers the positive minimum-shipment rows in the master. This auxiliary model uses only trimming patterns and loading configurations that can contribute to those mandatory rows and minimizes production cost plus transportation cost. Its solution $(y^{\text{cut,base}}, y^{\text{load,base}})$ guarantees the mandatory shipments.

\textbf{Stage 2: Baseline accounting.} Given $y^{\text{cut,base}}$, the algorithm computes the run hours already implied by the mandatory-shipment baseline. This is not an optimization step. It simply records how much of the fixed Phase~1 column set is already committed before residual reoptimization is attempted.

\textbf{Stage 3: Residual LP on the full column set.} Next, a residual LP is solved over the full Phase~1 column set. The minimum-shipment rows are temporarily removed, but lower bounds are imposed so that the LP must keep at least the baseline activity. This LP can reallocate only the nonmandatory part of the solution across the full trimming-pattern and loading-configuration sets retained from Phase~1.

\textbf{Stage 4: Rounded implementable start.} The residual LP solution is then rounded asymmetrically. Trimming-pattern counts $y^{\text{cut}}$ are rounded down, whereas loading-configuration counts $y^{\text{load}}$ are rounded up. Rounding down trimming-pattern counts is conservative on production, while rounding up loading-configuration counts preserves the lane capacity needed to support the loaded rolls. After rounding, the loading-configuration counts are fixed temporarily.

\textbf{Preliminary Benders pass with fixed loading counts.} With the rounded $y^{\text{load}}$ values fixed, the algorithm runs a short Phase~2 Benders loop on the same master model until no violated demand-side cuts remain, or until a short time limit is reached. During this pass, only the trimming side is allowed to adjust. The goal is to improve the incumbent before the final solve by closing part of the demand-side cuts while keeping the loading side fixed.

\textbf{Final Phase~2 solve.} After this preliminary pass, the loading-configuration bounds are restored, and the last incumbent from the warm-start loop is used to initialize the full Phase~2 integer solve. The algorithm then executes the standard Phase~2 refinement loop described in the main paper.

\textbf{Summary of the initialization.} The warm start is designed to improve the first incumbent and reduce early branch-and-bound effort. It first guarantees the mandatory shipments, then rebalances the remaining activity over the full fixed column set, and finally performs a short cut-closure pass before the final integer solve begins. The underlying Phase~2 algorithm remains the standard fixed-column integer Benders loop.

\section{Proofs omitted from the main text}

Each theoretical result from the main text is restated before its proof. The exact-reformulation claim from
Section~\ref{sec:pattern_based_model} is also recorded in proof form.

\subsection{Supply-profile decomposition (Proposition~\ref{prop:supply_profile_decomposition})}

\noindent\textbf{Proposition~\ref{prop:supply_profile_decomposition} (restated).}
Let $\mathbf{S}=(\mathbf{S}_{p,c})_{p\in P,\ c\in C^{\mathrm{MTS}}}$ be a fixed feasible aggregate MTS supply profile, where $\mathbf{S}_{p,c}=(S_{p,c,t})_{t\in T}$. If $Z^{\mathrm{LP}}_{\mathrm{MTS}}(\mathbf{S})$ denotes the minimum downstream MTS LP cost for that profile, then
\[
  Z^{\mathrm{LP}}_{\mathrm{MTS}}(\mathbf{S})
  =
  \sum_{p\in P}\sum_{c\in C^{\mathrm{MTS}}} Z^{\mathrm{LP}}_{p,c}(\mathbf{S}_{p,c}).
\]

\medskip
\noindent\emph{Proof.}
Once $\mathbf{S}$ is fixed, the downstream MTS variables for one pair $(p,c)$ consist only of that pair's inventory, lost-sales, safety-stock-shortfall, and penalty variables over time. The corresponding flow-balance, safety-stock, and penalty constraints involve only the same pair's supply vector $\mathbf{S}_{p,c}$ and data. No downstream MTS constraint or objective term couples one pair $(p,c)$ to another. Therefore the downstream MTS LP separates into independent chain LPs indexed by $(p,c)\in P\times C^{\mathrm{MTS}}$, and its optimal value is the sum of the individual chain values. \Halmos

\subsection{Productwise reduction of trimming pricing (Lemma~\ref{lem:trim_productwise_reduction})}

\noindent\textbf{Lemma~\ref{lem:trim_productwise_reduction} (restated).}
For a fixed run $b$, the optimal value of trimming pricing problem \eqref{eq:bdcg_trim_pricing} equals the optimal value of
\[
  \max \left\{
    \sum_{p\in P} \bar\delta_{b,p}\,x_p
    \;:\;
    \sum_{p\in P} w_p x_p \le W_b,\;
    x_p\in\mathbb{Z}_+
  \right\},
\]
where $\bar\delta_{b,p}:=\max_{c\in C,\,v\in V}\delta_{b,p,c,v}$. Moreover, any optimal aggregate solution can be lifted to an optimal trimming pattern by assigning each product $p$ to any customer-lane pair attaining $\bar\delta_{b,p}$.

\medskip
\noindent\emph{Proof.}
Consider any feasible solution $a=(a_{p,c,v})$ of \eqref{eq:bdcg_trim_pricing}, and define aggregate product counts
\[
  x_p := \sum_{c\in C}\sum_{v\in V} a_{p,c,v},
  \qquad p\in P.
\]
Because all copies of product $p$ have the same width $w_p$, the aggregate vector $x=(x_p)$ satisfies
\[
  \sum_{p\in P} w_p x_p
  =
  \sum_{p\in P}\sum_{c\in C}\sum_{v\in V} w_p a_{p,c,v}
  \le W_b,
\]
so $x$ is feasible for the reduced productwise knapsack. Its objective value satisfies
\[
  \sum_{p\in P}\sum_{c\in C}\sum_{v\in V} \delta_{b,p,c,v}\,a_{p,c,v}
  \le
  \sum_{p\in P}\bar\delta_{b,p}\,x_p,
\]
because $\delta_{b,p,c,v}\le \bar\delta_{b,p}$ for every customer-lane pair. Hence every feasible pattern solution maps to a feasible aggregate solution with objective value at least as large.

Conversely, let $x$ be any feasible solution of the reduced productwise knapsack. For each product $p$, choose any pair $(c^\star_p,v^\star_p)$ attaining $\bar\delta_{b,p}$. Define
\[
  a_{p,c,v}
  :=
  \begin{cases}
    x_p, & (c,v)=(c^\star_p,v^\star_p),\\
    0,   & \text{otherwise}.
  \end{cases}
\]
This lifted pattern is feasible for \eqref{eq:bdcg_trim_pricing} because it uses the same aggregate counts $x_p$ and therefore the same total width. Its objective value is
\[
  \sum_{p\in P}\sum_{c\in C}\sum_{v\in V} \delta_{b,p,c,v}\,a_{p,c,v}
  =
  \sum_{p\in P} \bar\delta_{b,p}\,x_p.
\]
Thus every feasible aggregate solution can be lifted to a feasible pattern solution with the same objective value, and the two optimization problems have the same optimal value. The lifting construction also proves the final claim. \Halmos

\subsection{LP optimality of Phase~1 (Proposition~\ref{prop:ph1_lp_optimality})}

\noindent\textbf{Proposition~\ref{prop:ph1_lp_optimality} (restated).}
For each $(p,c)\in P\times C^{\mathrm{MTS}}$, let $\mathcal{T}_{p,c}$ denote the family of all supporting hyperplanes of $Z^{\mathrm{LP}}_{p,c}(\cdot)$. Suppose each Phase~1 restricted master LP is solved to optimality, the trimming and loading pricing subproblems are solved exactly, and demand-side separation returns a violated cut from $\mathcal{T}_{p,c}$ whenever $(\mathbf{S}_{p,c},\theta^{\mathrm{MTS}}_{p,c})$ violates the epigraph of $Z^{\mathrm{LP}}_{p,c}$. If Phase~1 terminates with no negative-reduced-cost trimming pattern, no negative-reduced-cost loading configuration, and no violated demand-side cut, then the current solution is optimal for the LP relaxation of the decomposed master over the full supply-side column families and the full cut families $\{\mathcal{T}_{p,c}\}$.

\medskip
\noindent\emph{Proof.}
For each $(p,c)$, let $\bar{\mathcal{T}}_{p,c}\subseteq \mathcal{T}_{p,c}$ denote the cut pool present at Phase~1 termination, and let $(\bar y,\bar r,\bar S,\bar\theta)$ denote the terminating solution. Because the terminating restricted master LP is solved to optimality, $(\bar y,\bar r,\bar S,\bar\theta)$ is optimal for the LP master defined by the current column pools and the current cut pools.

Exact pricing and the absence of negative-reduced-cost trimming patterns or loading configurations imply the standard Dantzig--Wolfe optimality condition: the same solution is optimal for the LP master obtained by replacing the restricted supply-side column pools with the full families $K_b$ and $\Gamma_{b,c,v}$ while keeping the cut pools fixed at $\bar{\mathcal{T}}_{p,c}$.

Next consider the cut families. Because demand-side separation is exact and no violated cut remains at termination, we have
\[
  \bar\theta^{\mathrm{MTS}}_{p,c} \ge Z^{\mathrm{LP}}_{p,c}(\bar{\mathbf{S}}_{p,c})
  \qquad \forall\, (p,c)\in P\times C^{\mathrm{MTS}}.
\]
Therefore $(\bar y,\bar r,\bar S,\bar\theta)$ lies in the epigraph of each value function $Z^{\mathrm{LP}}_{p,c}$, and hence it satisfies every supporting hyperplane in $\mathcal{T}_{p,c}$. In particular, it is feasible for the full decomposed LP master with the complete cut families $\{\mathcal{T}_{p,c}\}$.

Finally, the LP master with the complete cut families is a restriction of the LP master with the same full column families and only the smaller cut pools $\bar{\mathcal{T}}_{p,c}$. Any feasible solution of the former is therefore feasible for the latter. Since $(\bar y,\bar r,\bar S,\bar\theta)$ is already optimal for the latter, no feasible solution of the full decomposed LP master can have a strictly smaller objective value. Hence $(\bar y,\bar r,\bar S,\bar\theta)$ is optimal for the LP relaxation of the decomposed master over the full supply-side column families and the full cut families $\{\mathcal{T}_{p,c}\}$. \Halmos

\subsection{Exact reformulation of the pattern-based model}

\noindent\textbf{Exactness claim.}
The integer pattern-based formulation \eqref{form:MP} is an exact reformulation of the item-based model. Every feasible item-based solution can be aggregated into a feasible integer pattern-based solution with the same objective value, and every feasible integer pattern-based solution can be disaggregated into roll-level cutting decisions and vehicle-level loading decisions with the same objective value.

\medskip
\noindent For the finite-index exactness argument, the companion makes explicit one auxiliary lane-level cap that is omitted from the main-text display because it is a technical finiteness device rather than part of the shipping logic:
\begin{equation}
\label{eq:ec_pat_load_count}
  \sum_{\gamma\in\Gamma_{b,c,v}} y^{\text{load}}_{b,c,v,\gamma}
  \le |J_{b,c,v}|,
  \qquad \forall\, b,c,v.
\end{equation}

Because $\mathrm{cost}^{\text{veh}}_{b,c,v}>0$, at least one optimal pattern-based solution dispatches no vehicle whose load can be removed, so imposing \eqref{eq:ec_pat_load_count} does not change the optimal value; the cap is omitted from the implemented master.

\medskip
\noindent\emph{Proof.}
Consider first any feasible item-based solution. For each produced jumbo roll $o\in O_b$ with $u_{b,o}=1$, define its cut vector
\[
  a_{p,c,v}(o):=x_{o,p,c,v}.
\]
By \eqref{aix_eq:deckle_cap}, this vector is width-feasible for run $b$, so it is a feasible trimming pattern for that run. Aggregating identical roll-level cut vectors over all produced rolls in run $b$ yields integer pattern counts $y^{\text{cut}}_{b,k}$. Likewise, for each dispatched vehicle $j\in J_{b,c,v}$ with $z_{b,c,v,j}=1$, define its load vector
\[
  a^{\text{load}}_{p}(j):=\chi_{b,p,c,v,j}.
\]
By \eqref{aix_eq:veh_wt_ind}--\eqref{aix_eq:veh_pos_ind}, this vector is a feasible loading configuration on lane $(b,c,v)$. Aggregating identical vehicle-load vectors yields integer counts $y^{\text{load}}_{b,c,v,\gamma}$. The run-time constraints, minimum-shipment constraints, and supply-profile equations are preserved by construction, and \eqref{eq:pat_coupling} follows from the item-based linking equalities \eqref{aix_eq:load_link}. Production cost, transportation cost, lost-sales cost, and safety-stock penalty cost are unchanged, so the aggregated pattern solution is feasible with the same objective value.

Conversely, consider any feasible integer pattern-based solution. For each run $b$ and each pattern $k$ used with multiplicity $y^{\text{cut}}_{b,k}$, create that many explicit jumbo rolls in $O_b$, set $u_{b,o}=1$ on those created rolls and $u_{b,o}=0$ on all remaining roll indices, assign to each created roll the cut quantities
\[
  x_{o,p,c,v}:=a_{p,c,v,k}.
\]
For unused roll indices, set $x_{o,p,c,v}=0$ for all $(p,c,v)$.
This is feasible because each pattern in $K_b$ satisfies the deckle restriction. Moreover, every feasible pattern-based solution satisfies the inherited run upper bound $r_b\le r_b^{\max}$, so \eqref{eq:pat_run_vs_patterns} implies
\[
  h_b\sum_{k\in K_b} y^{\text{cut}}_{b,k} \le r_b \le r_b^{\max}.
\]
Hence
\[
  \sum_{k\in K_b} y^{\text{cut}}_{b,k} \le \left\lfloor \frac{r_b^{\max}}{h_b}\right\rfloor = |O_b|,
\]
which guarantees that enough potential roll indices are available in $O_b$. Similarly, for each lane $(b,c,v)$ and each configuration $\gamma$ used with multiplicity $y^{\text{load}}_{b,c,v,\gamma}$, create that many explicit vehicles in $J_{b,c,v}$, set $z_{b,c,v,j}=1$ for those created vehicles and $z_{b,c,v,j}=0$ for all remaining indices, and define componentwise capacity vectors
\[
  \bar\chi_{b,p,c,v,j}:=a^{\text{load}}_{b,p,c,v,\gamma}.
\]
Each such vector is feasible by definition of $\Gamma_{b,c,v}$, and the auxiliary cap \eqref{eq:ec_pat_load_count} ensures that the required multiplicities fit within the available vehicle indices in $J_{b,c,v}$. Because \eqref{eq:pat_coupling} guarantees that the aggregate cut quantity of every product on every lane does not exceed the aggregate reserved product capacity on that lane, the actual cut quantities can be allocated across the created vehicles so that
\[
  0 \le \chi_{b,p,c,v,j} \le \bar\chi_{b,p,c,v,j}
\]
for every product and vehicle, and
\[
  \sum_{j\in J_{b,c,v}} \chi_{b,p,c,v,j}
  =
  \sum_{o\in O_b} x_{o,p,c,v}
\]
for every $(b,p,c,v)$. One constructive way to obtain such an allocation is to assign each product independently across the created vehicles on lane $(b,c,v)$, processing the vehicles in any order and loading up to the remaining reserved capacity $\bar\chi_{b,p,c,v,j}$ for that product; the aggregate inequality \eqref{eq:pat_coupling} guarantees that this greedy assignment exhausts the required quantity. For all noncreated vehicle indices, set $\chi_{b,p,c,v,j}=0$ for every product $p$. Any componentwise reduction of a feasible loading vector remains feasible under nonnegative weight and position coefficients, so the explicit vehicle loads satisfy \eqref{aix_eq:load_link} and the item-based capacity constraints. Carrying over the same run-time, lost-sales, inventory, and penalty variables yields a feasible item-based solution with the same objective value. \Halmos

\subsection{Integrality on integral supply profiles (Lemma~\ref{lem:demand_integrality})}

\noindent\textbf{Lemma~\ref{lem:demand_integrality} (restated).}
Fix a customer-product pair $(p,c)$. Suppose the initial inventory $I^{\text{init}}_{p,c}$, demands $d_{p,c,t}$, safety-stock targets $ss_{p,c,t}$, and the convex piecewise-linear safety-stock penalty admit an equivalent breakpoint representation with integral breakpoints. Then, for any integral supply vector $\mathbf{S}_{p,c}$, subproblem \eqref{model:demand_primal} admits an equivalent incremental linear formulation with a totally unimodular constraint matrix and an integral right-hand side. Consequently, if $Z^{\mathrm{IP}}_{p,c}(\mathbf{S}_{p,c})$ denotes the value of the same chain problem under the original discrete inventory domains of $I^{+}_{p,c,t}$, $l_{p,c,t}$, and $I^{-}_{p,c,t}$, then $Z^{\mathrm{LP}}_{p,c}(\mathbf{S}_{p,c}) = Z^{\mathrm{IP}}_{p,c}(\mathbf{S}_{p,c})$.

\medskip
\noindent\emph{Proof.}
Within this proof only, write $I_t:=I^{+}_{p,c,t}$, $l_t:=l_{p,c,t}$, and $x_t:=I^{-}_{p,c,t}$ for brevity. Let
\[
  \tilde\varphi_t(x)
  :=
  \max\!\Bigl\{0,\max_{i\in N}\{\alpha_{p,c,t,i}x-\beta_{p,c,t,i}\}\Bigr\},
  \qquad x\ge 0,
\]
denote the convex piecewise-linear safety-stock penalty induced by \eqref{eq:demand_cost} and
\eqref{eq:demand_vars} in period $t$. Because $\psi_{p,c,t}$ is minimized in \eqref{model:demand_primal}, every optimal
solution satisfies $\psi_{p,c,t}=\tilde\varphi_t(x_t)$. The variable $\psi_{p,c,t}$ can
therefore be eliminated from the proof without changing the optimal value.
Under the lemma's assumption, this same function admits an equivalent breakpoint representation with integral breakpoints.

From the flow-balance constraints in \eqref{model:demand_primal}, the end-of-period inventory can be written recursively as
\[
  I_t
  =
  I^{\text{init}}_{p,c}
  + \sum_{\tau=1}^{t}\bigl(S_{p,c,\tau}-d_{p,c,\tau}+l_{\tau}\bigr),
  \qquad t\in T.
\]
Hence the inventory nonnegativity constraints $I_t\ge 0$ are equivalent to
\begin{equation}
\label{eq:ec_cum_inv_nonneg}
  \sum_{\tau=1}^{t} l_{\tau}
  \;\ge\;
  \sum_{\tau=1}^{t} d_{p,c,\tau}
  - I^{\text{init}}_{p,c}
  - \sum_{\tau=1}^{t} S_{p,c,\tau},
  \qquad t\in T.
\end{equation}
Similarly, the safety-stock constraints $x_t + I_t \ge ss_{p,c,t}$ are equivalent to
\begin{equation}
\label{eq:ec_cum_ss}
  \sum_{\tau=1}^{t} l_{\tau} + x_t
  \;\ge\;
  ss_{p,c,t}
  + \sum_{\tau=1}^{t} d_{p,c,\tau}
  - I^{\text{init}}_{p,c}
  - \sum_{\tau=1}^{t} S_{p,c,\tau},
  \qquad t\in T.
\end{equation}

Now consider the convex piecewise-linear function $\tilde\varphi_t$. Let its integral breakpoints, i.e., the points at
which its slope changes, be
\[
  0=\bar b_{t,0}<\bar b_{t,1}<\cdots<\bar b_{t,m_t},
\]
and let $s_{t,r}$ denote the slope on interval $[\bar b_{t,r-1},\bar b_{t,r}]$ for $r=1,\dots,m_t$, with
$s_{t,m_t+1}$ denoting the slope on the final ray $[\bar b_{t,m_t},\infty)$. Convexity implies that these slopes are
nondecreasing. Introduce incremental variables $z_{t,r}$ such that
\[
  x_t = \sum_{r=1}^{m_t+1} z_{t,r},
\]
with
\[
  0\le z_{t,r}\le \bar b_{t,r}-\bar b_{t,r-1}
  \quad (r=1,\dots,m_t),
  \qquad
  z_{t,m_t+1}\ge 0,
\]
where $z_{t,m_t+1}$ represents the final unbounded ray if the penalty extends beyond the last breakpoint. In this
representation,
\[
  \tilde\varphi_t(x_t)
  =
  c_t + \sum_{r=1}^{m_t+1} s_{t,r}\,z_{t,r},
\]
where $c_t:=\tilde\varphi_t(0)$ is constant with respect to the decision variables. Substituting
$x_t=\sum_{r=1}^{m_t+1} z_{t,r}$ into \eqref{eq:ec_cum_ss} yields
\begin{equation}
\label{eq:ec_incremental_ss}
  \sum_{\tau=1}^{t} l_{\tau} + \sum_{r=1}^{m_t+1} z_{t,r}
  \;\ge\;
  ss_{p,c,t}
  + \sum_{\tau=1}^{t} d_{p,c,\tau}
  - I^{\text{init}}_{p,c}
  - \sum_{\tau=1}^{t} S_{p,c,\tau},
  \qquad t\in T.
\end{equation}
Therefore, up to the additive constant $\sum_{t\in T} c_t$, subproblem \eqref{model:demand_primal} is equivalent to the
following incremental linear program:
\begin{align}
  \min \quad
  & \sum_{t\in T} \mathrm{cost}^{\text{lost}}_{p,c}\,l_t
    + \sum_{t\in T}\sum_{r=1}^{m_t+1} s_{t,r}\,z_{t,r}
    \label{eq:ec_incremental_obj} \\
  \text{s.t.}\quad
  & \eqref{eq:ec_cum_inv_nonneg},\ \eqref{eq:ec_incremental_ss}, \nonumber\\
  & 0 \le l_t \le d_{p,c,t}
    && \forall\, t\in T, \label{eq:ec_incremental_l_bounds}\\
  & 0 \le z_{t,r} \le \bar b_{t,r}-\bar b_{t,r-1}
    && \forall\, t\in T,\ r=1,\dots,m_t, \label{eq:ec_incremental_z_bounds}\\
  & z_{t,m_t+1}\ge 0
    && \forall\, t\in T. \label{eq:ec_incremental_last_ray}
\end{align}

The right-hand sides in \eqref{eq:ec_cum_inv_nonneg} and \eqref{eq:ec_incremental_ss} are integral because
$I^{\text{init}}_{p,c}$, $d_{p,c,t}$, $ss_{p,c,t}$, and the supply profile $\mathbf{S}_{p,c}$ are integral by assumption.
The upper bounds in \eqref{eq:ec_incremental_l_bounds}--\eqref{eq:ec_incremental_z_bounds} are also integral.

It remains to show total unimodularity. Consider the coefficient matrix of the cumulative constraints
\eqref{eq:ec_cum_inv_nonneg} and \eqref{eq:ec_incremental_ss} after multiplying those rows and the columns for $l_t$ and
$z_{t,r}$ by $-1$, which preserves total unimodularity and produces a $0$--$1$ matrix. Order the rows as follows:
\[
  \eqref{eq:ec_cum_inv_nonneg}\text{ for }t=1,\quad
  \eqref{eq:ec_incremental_ss}\text{ for }t=1,\quad
  \eqref{eq:ec_cum_inv_nonneg}\text{ for }t=2,\quad
  \eqref{eq:ec_incremental_ss}\text{ for }t=2,\ \dots
\]
Under this ordering, each lost-sales variable $l_{\tau}$ has coefficient $+1$ in every row associated with periods
$t\ge \tau$ and coefficient $0$ before that point. Hence the nonzero entries in the column of $l_{\tau}$ form one consecutive
block. Each incremental variable $z_{t,r}$ appears only in the safety-stock row for period $t$, so its column also has a
single consecutive block of nonzero entries. Thus the resulting $0$--$1$ coefficient matrix has the consecutive-ones property
for columns and is therefore totally unimodular. Appending the upper-bound rows
\eqref{eq:ec_incremental_l_bounds}--\eqref{eq:ec_incremental_z_bounds} only adds unit rows, which preserves total
unimodularity, while the nonnegativity restrictions are standard sign constraints on the variables.

Because the incremental formulation has a totally unimodular constraint matrix and an integral right-hand side, it has an
integral optimal extreme point. In particular, the optimal values of $l_t$ and all incremental variables $z_{t,r}$ can be
chosen integral. It follows that $x_t=\sum_r z_{t,r}$ is integral for every $t$. Finally, the recovered inventory sequence
\[
  I_t
  =
  I^{\text{init}}_{p,c}
  + \sum_{\tau=1}^{t}\bigl(S_{p,c,\tau}-d_{p,c,\tau}+l_{\tau}\bigr)
\]
is integral as well. Therefore the original chain subproblem admits an optimal solution in which $I_t$, $l_t$, and $x_t$ are
all integral. \Halmos

\subsection{Finite termination of Phase~2 refinement (Proposition~\ref{prop:ph2_finite_termination})}

\noindent For the finite-state argument, the companion likewise makes explicit the omitted fixed-pool lane-level cap
\begin{equation}
\label{eq:ec_rmp_load_count}
  \sum_{\gamma\in\bar{\Gamma}_{b,c,v}} y^{\text{load}}_{b,c,v,\gamma}
  \le |J_{b,c,v}|,
  \qquad \forall\, b,c,v,
\end{equation}
which is suppressed in the main-text RMP display for readability. The implemented Phase-2 master also omits this cap; because $\mathrm{cost}^{\text{veh}}_{b,c,v}>0$, every optimal integer master solution satisfies \eqref{eq:ec_rmp_load_count}, since a vehicle whose load is not required by the coupling constraints can be removed at a cost saving. The finiteness argument below therefore applies to the incumbents encountered by the loop.

\medskip
\noindent\textbf{Proposition~\ref{prop:ph2_finite_termination} (restated).}
Assume Phase~2 is solved over fixed column pools and that each integer master solve is carried out to optimality. Under \eqref{eq:RMP_bound}, \eqref{eq:RMP_prod}, and \eqref{eq:ec_rmp_load_count}, the retained integer column-usage variables then range over a finite set. Assume also that every chain LP \eqref{model:demand_primal} is feasible and bounded for all $\mathbf{S}_{p,c}\ge 0$. If at each iteration $\ell$ at least one tangent cut of the form \eqref{eq:benders_optimality_cut} is added for every violated chain, then the Phase~2 refinement loop terminates in finitely many iterations (up to tolerance $\varepsilon_{\mathrm{viol}}$).

\medskip
\noindent\emph{Proof.}
Because the column pools are fixed, the trimming-pattern counts are bounded by \eqref{eq:RMP_bound} and \eqref{eq:RMP_prod}, while the loading-configuration counts are bounded by \eqref{eq:ec_rmp_load_count}. Since the retained column pools are finite and the corresponding variables are integer, only finitely many integer column-usage vectors can occur. Denote this finite set of reachable optimal integer incumbents by $\mathcal{Y}$, with $|\mathcal{Y}|<\infty$.

At each iteration $\ell$, the integer master returns an incumbent $y^\ell\in\mathcal{Y}$ that induces supply profiles
$\hat{\mathbf{S}}^\ell_{p,c}$ and epigraph values $\hat\theta^{\mathrm{MTS},\ell}_{p,c}$. If chain $(p,c)$ is violated (i.e.,
$Z^{\mathrm{LP}}_{p,c}(\hat{\mathbf{S}}^\ell_{p,c})-\hat\theta^{\mathrm{MTS},\ell}_{p,c}>\varepsilon_{\mathrm{viol}}$), a tangent cut is added
at $\hat{\mathbf{S}}^\ell_{p,c}$:
\[
  \theta^{\mathrm{MTS}}_{p,c}
  \;\ge\;
  Z^{\mathrm{LP}}_{p,c}(\hat{\mathbf{S}}^\ell_{p,c})
  + \pi^\top(\mathbf{S}_{p,c}-\hat{\mathbf{S}}^\ell_{p,c}).
\]
When evaluated at the same supply profile $\hat{\mathbf{S}}^\ell_{p,c}$, this cut enforces
$\theta^{\mathrm{MTS}}_{p,c}\ge Z^{\mathrm{LP}}_{p,c}(\hat{\mathbf{S}}^\ell_{p,c})$. Therefore, if the same integer solution $y^\ell$ (and hence
the same $\hat{\mathbf{S}}^\ell_{p,c}$) were returned in a later iteration, the current collection of cuts would already contain this tangent cut,
and the chain $(p,c)$ could not be violated at the same profile by more than $\varepsilon_{\mathrm{viol}}$.

Since each violating incumbent $y^\ell$ produces at least one new tangent cut that precludes re-acceptance of the same $(y^\ell,
c,p)$ violation pair, and $|\mathcal{Y}|\cdot|C^{\mathrm{MTS}}\times P|$ bounds the number of such distinct violating events,
the algorithm can encounter only finitely many of them before all chains satisfy
$\theta^{\mathrm{MTS}}_{p,c}\ge Z^{\mathrm{LP}}_{p,c}(\mathbf{S}_{p,c})-\varepsilon_{\mathrm{viol}}$ for every reachable integer
incumbent. \Halmos

\end{document}